\newtheorem{theorem}{Theorem}[section]
\newtheorem{corollary}[theorem]{Corollary}
\newtheorem{lemma}[theorem]{Lemma}
\newtheorem{proposition}[theorem]{Proposition}
\newtheorem{definition}[theorem]{Definition}
\theoremstyle{definition}
\newtheorem{remark}[theorem]{Remark}
\newcommand{\wt}[1]{\widetilde{#1}}
\newcommand{\Cinfc}{\ensuremath{\mathcal{C}^\infty_{\text{c}}}}
\newcommand{\D}{\ensuremath{{\mathcal D}}}
\renewcommand{\S}{\mathscr{S}}
\newcommand{\mF}{\mathcal{F}}
\newcommand{\dslash}{d\hspace{-0.4em}{ }^-\hspace{-0.2em}}
\newcommand{\mb}[1]{\ensuremath{\mathbb{#1}}}
\newcommand{\N}{\mb{N}}
\newcommand{\R}{\mb{R}}
\newcommand{\C}{\mb{C}}
\newcommand{\Z}{\mb{Z}}
\newcommand{\lara}[1]{\langle #1 \rangle}
\newcommand{\supp}{\mathrm{supp}}
\newfont{\bigmath}{cmr12 at 13pt}
\newfont{\grecomath}{cmmi12 at 15pt}
\newcommand{\esp}{\mathrm{e}}
\newfont{\bl}{msbm10 scaled \magstep2}
\newcommand{\beq}{\begin{equation}}
\newcommand{\eeq}{\end{equation}}
\newcommand{\notmid}{\mid\kern-0.5em\not\kern0.5em}
\newcommand{\inner}[3][\empty]{\ifx#1\empty\left( #2|#3\right)\else#1( #2|#3 #1)\fi}
\newcommand{\eps}{\varepsilon}
\renewcommand{\Re}{\ensuremath{\mathrm{Re}}}
\newcommand{\mC}{\mathcal{C}}
\begin{document}

\title{{\bf $L^p$ and Sobolev boundedness of pseudodifferential operators with non-regular symbol: a regularisation approach}}
\author{Claudia Garetto\footnote{Supported by JRF (Imperial College London)} \\
Department of Mathematics\\
Imperial College London\\
\texttt{c.garetto@imperial.ac.uk}\\
}
\date{ }
\maketitle
\begin{abstract}
In this paper we investigate $L^p$ and Sobolev boundedness of a certain class of pseudodifferential operators with non-regular symbols. We employ regularisation methods, namely convolution with a net of mollifiers $(\rho_\eps)_\eps$, and we study the  corresponding net of pseudodifferential operators providing $L^p$ and Sobolev estimates which relate the parameter $\eps$ with the non-regularity of the symbol.
\end{abstract}

{\bf{Key words:}} pseudodifferential operators, non-regular symbols, $L^p$ and Sobolev boundedness, regularisation.

\setcounter{section}{-1}
\section{Introduction}
The mathematical research in the field of pseudodifferential operators most frequently concentrates on operators with smooth symbols. However, applications to several problems in PDE, from nonlinear problems to problems on non-smooth domains, require symbols with minimal smoothness, i.e., non-regular symbols. With this expression we mean symbols which are smooth in the variable $\xi$ but less regular in $x$ (for instance in some Sobolev space or H\"older-Zygmund class). Pseudodifferential operators with non-regular symbol have been studied by different authors in particular in connection with their mapping properties on $L^p$ and Sobolev spaces. We recall the work of M. Nagase and H. Kumano-go at the end of the 70's in \cite{KuNa:78, Nagase:77}, the work of J. Marschall for differential operators with Sobolev coefficients in \cite{Marschall:87, Marschall:88} and the deep study of pseudodifferential operators with non-regular symbol of H\"older-Zygmund type in \cite[Chapters 1, 2]{Taylor:91} and \cite[Chapter 1]{Taylor:00}. For some recent work on non-regular symbols and boundedness results on $L^p$ spaces, Besov spaces or weighted Sobolev spaces we refer the reader to \cite{GM:04, GM:05, GM:09, GM:09a}.

In this paper we study pseudodifferential operators with non-regular symbol in the exotic class $\mathcal{C}^r_\ast S^m_{1,1}$, i.e., of type $(1,1)$, smooth in $\xi$ and in the class $\mathcal{C}^r_\ast$ with respect to $x$. Inspired by the continuity result of G. Bourdaud in \cite{Bou:88} and the pioneering work of Stein \cite{Stein:72}, Michael Taylor has proved that if $r>0$ a pseudodifferential operator with symbol in $\mathcal{C}^0_\ast S^m_{1,1}$ is bounded on $H^{s,p}$ provided that $0<s<r$ and $p\in(1,\infty)$ (see \cite[Theorem 2.1.A]{Taylor:91}). Our aim is to enlarge the family of Sobolev spaces on which this type of non-regular pseudodifferential operators are bounded. We do this via approximation/regularisation methods, in the sense that given $a\in\mathcal{C}^r_\ast S^0_{1,1}$ we study a net of pseudodifferential operators $a_\eps(x,D)$ with regular symbol $a_\eps$ converging to $a$ when $\eps$ tends to $0$. $a_\eps$ is regular in $x$ since it is obtained via convolution with a mollifier $\rho$, i.e., $a_\eps(x,\xi)=(a(\cdot,\xi)\ast\rho_\eps)(x)$, with $\rho_\eps(x)=\eps^{-n}\rho(x/\eps)$. From the boundedness result of Taylor we clearly expect a blow-up in $\eps$ when $s>r$. Our main achievement is a precise estimate of this blow-up which can still detach the boundedness on Sobolev spaces when $s\in(0,r)$. In detail, for a pseudodifferential operator with symbol $a$ in $\mathcal{C}^r_\ast S^0_{1,1}$ we get a blow-up of type $\eps^{-h}$ in the $H^{s,p}$-norm when $r>0$ and $s\in(0,r+h)$, i.e., $\Vert a_\eps(x,D)f\Vert_{H^{s,p}}\le C\eps^{-h}\Vert f\Vert_{H^{s,p}}$ for all $f\in H^{s,p}$ and $\eps\in(0,1]$. More general nets of pseudodifferential operators, depending on the parameter $\eps$ but not necessarily defined via convolution with a mollifier $\rho_\eps$, are investigated in the final part of the paper under the point of view of $L^p$ and Sobolev continuity.

We now describe the contents of the paper in more detail.

Section 1 provides some background on H\"older-Zygmund classes and their regularisation. It is inspired by the investigation of H\"older-Zygmund regularity in the Colombeau framework initiated by G. H\"ormann in \cite{GH:04} and is motivated by geophysical problems (see \cite{HdH:01, HdH:01b, HdH:01c}) modeled through differential equations with H\"older-Zygmund coefficients. More precisely, we study how the net $u\ast\rho_\eps$ depends on the parameter $\eps$ when $u\in\mathcal{C}^s_\ast(\R^n)$ and we compare its $\mathcal{C}^{s+r}_\ast$- and $\mathcal{C}^s_\ast$-norms for arbitrary positive $r$. We complete this section with some interpolation and continuity results for nets of linear operators which will be employ in the Sobolev context in Section 2. Section 2 is the core of the paper where the $L^p$- and $H^{s,p}$-boundedness of regularised nets of pseudodifferential operators is proved. The results obtained for operators with symbol $a_\eps(x,\xi)=(a(\cdot,\xi)\ast\rho_\eps)(x)$ when  $a$ in $\mathcal{C}^r_\ast S^m_{1,1}$ follow the line of proof of Taylor in \cite[Chapters 1, 2]{Taylor:91} and \cite[Chapter 1]{Taylor:00} and make use of concepts as $S^1_0$-partition of unity, equivalent Sobolev norms and decomposition into elementary symbols. The novelty represented by the parameter $\eps$ is certainly crucial and requires, at every step of the proof, precise estimates which keep track of it. Our main result is not the continuity estimate per se but rather the understanding of how the continuity constant (now depending on the parameter $\eps$) is related to the H\"older-Zygmund properties of the non-regular symbol $a$. By means of its refined methods and boundedness results, Section 2 also provides a new way to look at nets of pseudodifferential operators of the type recently studied in \cite{GGO:03, GH:05} in the framework of Colombeau algebras. The paper ends with Section 3 where we consider arbitrary nets $(a_\eps)_\eps$ of regular symbols and we prove $L^p$ and Sobolev boundedness of the corresponding nets of pseudodifferential operators. 
\section{Regularisation in the space $\mC^s_\ast(\R^n)$ and some notions of interpolation}
This section provides the technical background necessary for the investigation of $L^p$ and Sobolev boundedness in Section \ref{sec_Taylor}. We begin by studying the regularisation, via convolution with a mollifier, of tempered distributions in H\"older-Zygmund classes and we then pass to consider nets of linear operators acting on interpolation couples of Banach spaces.
\subsection{Regularisation via convolution with a mollifier in the space $\mC^s_\ast(\R^n)$ }
Following, \cite[Section 8.5]{Hoermander:97} we introduce the H\"older-Zygmund classes via a continuous Littlewood-Paley decomposition: let $\varphi\in\D(\R^n)$ real valued and symmetric such that $\varphi(\xi)=0$ for $|\xi|>1$ and $\varphi(\xi)=1$ for $|\xi|\le 1/2$. Let $\psi=\frac{d}{dt}\varphi(\xi/t)|_{t=1}=-\xi\cdot\nabla\varphi(\xi)$. Then,
\[
1=\varphi(\xi)+\int_1^{+\infty}\psi\biggl(\frac{\xi}{t}\biggr)\frac{dt}{t},
\]
and the decomposition formula
\[
u=\varphi(D)u+\int_1^{+\infty}\psi\biggl(\frac{D}{t}\biggr)u\frac{dt}{t}
\]
holds in $\S'(\R^n)$.

Let $s\in\R$ we define the Zygmund space $\mC^s_\ast(\R^n)$ as the set of all $u\in\S'(\R^n)$ such that
\[
\Vert u\Vert_{\mC^s_\ast}=\Vert \varphi(D)u\Vert_\infty+\sup_{t>1}t^s\Vert\psi(D/t)u\Vert_\infty<\infty
\]
with $\varphi$ and $\psi$ as above. Up to equivalence of the norm this definition is independent of the choice of the Littlewood-Paley decomposition $(\varphi,\psi)$. We recall that $\mC^s_\ast(\R^n)$ coincides with the H\"older space $\mC^s(\R^n)$ when $s>0$ is not integer. For a survey on H\"older and Zygmund classes we refer the reader to \cite[Sections 8.5, 8.6]{Hoermander:97} and \cite{Triebel:II}.

We now study the convolution of $u\in\mC^s_\ast(\R^n)$ with a \emph{mollifier} $\rho$, i.e. a function $\rho$ in $\S(\R^n)$ with $\int\rho=1$. More precisely we will convolve $u$ with the delta-net $\rho_\eps(x):=\eps^{-n}\rho(x/\eps)$. The following preliminary lemma can be found in \cite[Section 2.6, Lemma 12]{Meyer:92} and has been adapted to the case of rapidly decreasing functions in \cite[Lemma 17]{GH:04}. We recall that for $r\in\N$, $\S_r(\R^n)$ denotes the space of all smooth functions $f$ such that $\sup_{x\in\R^n}(1+|x|)^{m}|\partial^\alpha f(x)|<\infty$ for all $m\in\N$ and all $\alpha\in\N^n$ with $|\alpha|\le r$.
\begin{lemma}
\label{lemma_gue}
\leavevmode
\begin{itemize}
\item[(i)] Let $s,r\in\N$ with $0\le s\le r$. If $f\in \S_r(\R^n)$ has moments up to order $s$ vanishing, i.e., $\int x^\gamma f(x)dx=0$ for $|\gamma|\le s$ then there exist functions $f_\alpha\in \S_r(\R^n)$ with $|\alpha|=s$ such that
\[
f=\sum_{|\alpha|=s}\partial^\alpha f_\alpha.
\]
and $\int f_\alpha(x)\, dx=0$ for $|\alpha|=s$.
\item[(ii)] If $f\in\S(\R^n)$ has all the moments vanishing then the representation above holds for all $s\in\N$ with $f_\alpha\in\S(\R^n)$.
\end{itemize}
\end{lemma}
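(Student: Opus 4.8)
The plan is to pass to the Fourier side and reduce the statement to a smooth factorisation problem. The vanishing of the moments $\int x^\gamma f\,dx=0$ for $|\gamma|\le s$ is equivalent to $\partial^\gamma\widehat f(0)=0$ for $|\gamma|\le s$, that is, $\widehat f$ vanishes to order $s+1$ at the origin; and since every $f\in\S_r(\R^n)$ decays rapidly, $\widehat f\in\mathcal C^\infty(\R^n)$. After Fourier transform the desired identity $f=\sum_{|\alpha|=s}\partial^\alpha f_\alpha$ reads $\widehat f(\xi)=\sum_{|\alpha|=s}(i\xi)^\alpha\widehat{f_\alpha}(\xi)$, so it suffices to factor $\widehat f(\xi)=\sum_{|\alpha|=s}\xi^\alpha G_\alpha(\xi)$ with each $G_\alpha$ smooth, suitably decaying, and $G_\alpha(0)=0$. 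One then sets $f_\alpha:=i^{-s}\,\F^{-1}G_\alpha$, and because $\int f_\alpha\,dx=\widehat{f_\alpha}(0)=i^{-s}G_\alpha(0)$, the requirement $\int f_\alpha\,dx=0$ is \emph{exactly} the normalisation $G_\alpha(0)=0$.

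For part (ii) this factorisation is immediate. Using the multinomial identity $|\xi|^{2s}=\sum_{|\alpha|=s}\binom{s}{\alpha}\xi^{2\alpha}$ I would set
\[
G_\alpha(\xi)=\binom{s}{\alpha}\,\frac{\xi^\alpha\,\widehat f(\xi)}{|\xi|^{2s}},\qquad |\alpha|=s ,
\]
so that $\sum_{|\alpha|=s}\xi^\alpha G_\alpha=\widehat f$. Here $f\in\S(\R^n)$ forces $\widehat f\in\S(\R^n)$, and since all moments vanish $\widehat f$ is flat at the origin; division of a flat smooth function by $|\xi|^{2s}$ preserves smoothness and flatness at $0$ and rapid decay at infinity, so $G_\alpha\in\S(\R^n)$ with $G_\alpha(0)=0$, whence $f_\alpha\in\S(\R^n)$ with vanishing integral.

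For part (i) the same quotient is in general \emph{not} smooth at the origin, because $\widehat f$ vanishes there only to the finite order $s+1$; I would resolve this by a low/high frequency splitting. Fix $\chi\in\D(\R^n)$ with $\chi\equiv1$ near $0$ and write $\widehat f=\chi\widehat f+(1-\chi)\widehat f$. On the high part the symbol $\xi^\alpha/|\xi|^{2s}$ is homogeneous of degree $-s$, smooth off the origin and cut away from it, so $G_\alpha^{\mathrm{hi}}:=\binom{s}{\alpha}\,\xi^\alpha(1-\chi)\widehat f/|\xi|^{2s}$ is smooth, vanishes near $0$, and decays one order better than $\widehat f$. The compactly supported low part $\chi\widehat f$ still vanishes to order $s+1$, so Taylor's formula with integral remainder at order $s$,
\[
\chi\widehat f(\xi)=\sum_{|\alpha|=s}\xi^\alpha h_\alpha(\xi),\qquad h_\alpha(\xi)=\frac{s}{\alpha!}\int_0^1(1-t)^{s-1}\,\partial^\alpha(\chi\widehat f)(t\xi)\,dt ,
\]
gives smooth coefficients with $h_\alpha(0)=\partial^\alpha(\chi\widehat f)(0)/\alpha!=0$; multiplying each $h_\alpha$ by a cut-off equal to $1$ on $\supp(\chi\widehat f)$ makes them compactly supported without altering the sum. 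Setting $G_\alpha:=G_\alpha^{\mathrm{hi}}+(\text{the localised }h_\alpha)$ yields the factorisation with $G_\alpha(0)=0$.

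The main obstacle is the final verification that $f_\alpha=i^{-s}\F^{-1}G_\alpha$ actually lands in $\S_r(\R^n)$. The low-frequency contribution is the inverse transform of a compactly supported smooth function and is therefore Schwartz, so all the difficulty sits in the high-frequency piece: transferring derivatives onto $f$ via $\partial^\beta\F^{-1}G_\alpha^{\mathrm{hi}}=\F^{-1}\!\big((i\xi)^\beta G_\alpha^{\mathrm{hi}}\big)$, one must show that the degree-$(-s)$ multiplier converts the order-$r$ decay of $\widehat f$ into enough decay of $(i\xi)^\beta G_\alpha^{\mathrm{hi}}$ (together with its $\xi$-derivatives) to force $\partial^\beta f_\alpha$ to be rapidly decreasing for $|\beta|\le r$. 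This is precisely the delicate bookkeeping of the decay index against the order $s$ of the monomials, and it is where the argument has to be carried out carefully (or reorganised as an induction on $s$ built on the order-one block $\int h=0\Rightarrow h=\sum_j\partial_j h_j$); in part (ii) the flatness of $\widehat f$ makes all of this automatic.
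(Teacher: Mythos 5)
The paper itself does not prove this lemma; it quotes it from Meyer (Wavelets and Operators, Section 2.6, Lemma 12) and from H\"ormann's Lemma 17, where the argument is a direct real-variable construction --- essentially the induction on $s$ built from the one-moment block $\int h=0\Rightarrow h=\sum_j\partial_j h_j$ that you mention only as a fallback. Your Fourier-side route is a legitimate alternative, and part (ii) is complete as written: dividing the flat Schwartz function $\widehat f$ by $|\xi|^{2s}$ is unproblematic, and the normalisation $\int f_\alpha\,dx=0\Leftrightarrow G_\alpha(0)=0$ is handled correctly in both the low- and high-frequency pieces of part (i).

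In part (i), however, the step you defer is not mere bookkeeping; as sketched it fails, in two ways. First, membership in $\S_r(\R^n)$ requires $f_\alpha\in\Cinf(\R^n)$, and you never address smoothness: for $f\in\S_r$ one only gets $|\partial^\gamma\widehat f(\xi)|\le C_\gamma\lara{\xi}^{-r}$, so $G_\alpha^{\mathrm{hi}}$ and all its derivatives decay merely like $\lara{\xi}^{-r-s}$, and $\F^{-1}G_\alpha^{\mathrm{hi}}$ is not obviously more than finitely differentiable. Second, the mechanism you propose for the rapid decay of $\partial^\beta f_\alpha$ --- estimating $\F^{-1}\bigl(\partial_\xi^\gamma((i\xi)^\beta G_\alpha^{\mathrm{hi}})\bigr)$ --- breaks down: for $|\beta|=r$ the worst terms of $\partial_\xi^\gamma\bigl(\xi^\beta G_\alpha^{\mathrm{hi}}\bigr)$ are only $O(|\xi|^{-s})$, which is not integrable when $s\le n$, so no bound on $x^\gamma\partial^\beta f_\alpha$ comes out of this route. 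The repair is to stop estimating $G_\alpha^{\mathrm{hi}}$ pointwise and instead write $\F^{-1}G_\alpha^{\mathrm{hi}}=c_\alpha\,K_\alpha\ast f$ with $K_\alpha=\F^{-1}\bigl(\xi^\alpha(1-\chi(\xi))|\xi|^{-2s}\bigr)$: this kernel is smooth away from the origin with a locally integrable singularity there, and decays faster than any polynomial at infinity because $|x|^{2M}K_\alpha=\F^{-1}\bigl(\Delta^M(\,\cdot\,)\bigr)$ with $\Delta^M\bigl(\xi^\alpha(1-\chi)|\xi|^{-2s}\bigr)=O(|\xi|^{-s-2M})\in L^1$ for $M$ large. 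Then $\partial^\beta(K_\alpha\ast f)=K_\alpha\ast\partial^\beta f$ inherits rapid decay for $|\beta|\le r$, and smoothness of $K_\alpha\ast f$ follows by splitting $K_\alpha$ into a compactly supported distribution plus a Schwartz tail. With that convolution-kernel step supplied (or by reverting to Meyer's induction), your argument closes; without it, the $\S_r$-membership of the $f_\alpha$ --- which is the whole content of part (i) as opposed to part (ii) --- remains unproved.
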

\begin{proposition}
\label{prop_ast}
Let $\rho$ be a mollifier in $\S(\R^n)$ and let $s\in\R$. For all $r\ge0$ there exists $C>0$ such that
\[
\Vert u\ast\rho_\eps\Vert_{\mC^{r+s}_\ast}\le C\eps^{-r}\Vert u\Vert_{\mC^s_\ast}
\]
holds for all $u\in \mC^s_\ast(\R^n)$ and all $\eps\in(0,1]$.
\end{proposition}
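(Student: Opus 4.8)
The plan is to work on the Fourier side and to split the Littlewood--Paley synthesis of $u\ast\rho_\eps$ into a low- and a high-frequency regime according to whether $t\le 1/\eps$ or $t>1/\eps$. The starting observation is that convolution with $\rho_\eps$ is the Fourier multiplier with symbol $\FT{\rho}(\eps\xi)$, and hence commutes with the multipliers $\varphi(D)$ and $\psi(D/t)$; thus $\varphi(D)(u\ast\rho_\eps)=(\varphi(D)u)\ast\rho_\eps$ and $\psi(D/t)(u\ast\rho_\eps)=(\psi(D/t)u)\ast\rho_\eps$. Since $\Vert\rho_\eps\Vert_{L^1}=\Vert\rho\Vert_{L^1}$ is independent of $\eps$, Young's inequality together with the definition of the $\mC^s_\ast$-norm immediately gives $\Vert\varphi(D)(u\ast\rho_\eps)\Vert_\infty\le\Vert\rho\Vert_{L^1}\Vert u\Vert_{\mC^s_\ast}$, and for $1<t\le 1/\eps$,
\[
t^{r+s}\Vert\psi(D/t)(u\ast\rho_\eps)\Vert_\infty\le\Vert\rho\Vert_{L^1}\,t^{r}\,t^{s}\Vert\psi(D/t)u\Vert_\infty\le\Vert\rho\Vert_{L^1}\,t^{r}\Vert u\Vert_{\mC^s_\ast}\le\Vert\rho\Vert_{L^1}\,\eps^{-r}\Vert u\Vert_{\mC^s_\ast},
\]
using $r\ge0$ and $t\le 1/\eps$. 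Thus the low-frequency part already carries the desired factor $\eps^{-r}$.

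The heart of the matter is the regime $t>1/\eps$, where the crude bound above produces an unacceptable growth $t^{r}$ and must be beaten by the smoothing of $\rho_\eps$ at high frequencies. The key device is to fatten the Littlewood--Paley cut-off: choose $\eta\in\D(\R^n)$ with $\eta\equiv1$ on $\{1/2\le|\xi|\le1\}$ and $\supp\eta\subseteq\{1/4\le|\xi|\le2\}$, so that $\eta(\xi/t)\equiv1$ on the frequency support of $g:=\psi(D/t)u$. One then has the identity $g\ast\rho_\eps=g\ast K_{t,\eps}$ with $K_{t,\eps}:=\F^{-1}[\eta(\cdot/t)\FT{\rho}(\eps\cdot)]$, since inserting $\eta(\xi/t)$ does not change the product $\FT{g}(\xi)\FT{\rho}(\eps\xi)$. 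The plan is to estimate $\Vert K_{t,\eps}\Vert_{L^1}$: after the rescaling $K_{t,\eps}(x)=t^{n}h_\lambda(tx)$ with $\lambda:=\eps t$ and $h_\lambda:=\F^{-1}[\eta\,\FT{\rho}(\lambda\cdot)]$, the $L^1$-norm is scale invariant, $\Vert K_{t,\eps}\Vert_{L^1}=\Vert h_\lambda\Vert_{L^1}$. Since $\eta$ is supported where $|\xi|\ge1/4$ and $\FT{\rho}\in\S(\R^n)$, every $\xi$-derivative of order at most $n+1$ of $\eta(\xi)\FT{\rho}(\lambda\xi)$ is bounded by $C_M\lambda^{-M}$ for $\lambda>1$ and any $M$---the powers of $\lambda$ produced by differentiating $\FT{\rho}(\lambda\cdot)$ being absorbed by its rapid decay on $\supp\eta$; integrating by parts to control $\sup_x(1+|x|)^{n+1}|h_\lambda(x)|$ then yields $\Vert h_\lambda\Vert_{L^1}\le C_N\lambda^{-N}=C_N(\eps t)^{-N}$ for every $N$. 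By Young's inequality and the definition of the norm this gives, for $t>1/\eps$,
\[
t^{r+s}\Vert\psi(D/t)(u\ast\rho_\eps)\Vert_\infty\le C_N\,t^{r+s}(\eps t)^{-N}\Vert g\Vert_\infty\le C_N\,t^{r}(\eps t)^{-N}\Vert u\Vert_{\mC^s_\ast}.
\]

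It remains to check that the high-frequency contribution also carries only the factor $\eps^{-r}$. Choosing $N>r$ and writing $t=\eps^{-1}\tau$ with $\tau>1$, one computes $t^{r}(\eps t)^{-N}=\eps^{-r}\,\tau^{\,r-N}\le\eps^{-r}$, since $\tau^{\,r-N}<1$. Taking the supremum over $t$ in each regime and adding the $\varphi(D)$-term, all three pieces are bounded by a constant times $\eps^{-r}\Vert u\Vert_{\mC^s_\ast}$, which is the assertion. I expect the only genuinely delicate step to be the high-frequency kernel estimate $\Vert K_{t,\eps}\Vert_{L^1}\le C_N(\eps t)^{-N}$: the band-limiting via $\eta$ is what converts the pointwise Schwartz decay of $\FT{\rho}$ away from the origin into an honest $L^1$-bound, and keeping the powers of $t$ and $\eps$ correctly coupled through $\lambda=\eps t$ is essential for the final cancellation. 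One may note that $K_{t,\eps}$ has all moments vanishing, because its symbol vanishes to infinite order at $\xi=0$, so Lemma \ref{lemma_gue} offers an alternative, purely spatial route to the same decay via repeated integration by parts.
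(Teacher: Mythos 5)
Your proof is correct, and although it shares the paper's overall skeleton --- the same splitting of the supremum at $t=\eps^{-1}$, the same crude bound $t^{r}\le\eps^{-r}$ for $1<t\le\eps^{-1}$, and the same reduction of the high-frequency regime to an $L^1$ bound on a band-limited copy of the mollifier (your $K_{t,\eps}$ is precisely the paper's $\wt{\psi}(D/t)\rho_\eps$ with $\wt{\psi}=\eta$) --- the mechanism by which you obtain that $L^1$ bound is genuinely different. The paper uses Lemma \ref{lemma_gue}: since $\mF^{-1}\wt{\psi}$ has all moments vanishing, it is written as $\sum_{|\alpha|=r}\partial^\alpha(\mF^{-1}\wt{\psi}_\alpha)$, the $r$ derivatives are transferred onto $\rho_\eps$ to produce the factor $t^{-r}\eps^{-r}$ structurally, the residual norms $\Vert\wt{\psi}_\alpha(D/t)(D^\alpha\rho)_\eps\Vert_1$ are bounded by a constant for $t\ge\eps^{-1}$ by an argument outsourced to \cite{GH:04}, and non-integer $r$ requires a separate patch via an integer $r'\ge r$ and the observation $(t\eps)^{r-r'}\le 1$. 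You instead rescale to $\lambda=\eps t$ and exploit the rapid decay of $\FT{\rho}$ on the fixed annulus $\supp\eta$ to get $\Vert K_{t,\eps}\Vert_{L^1}\le C_N(\eps t)^{-N}$ for every $N$; this treats integer and non-integer $r$ uniformly, avoids the moment-vanishing decomposition entirely, and yields arbitrarily fast decay in $\eps t$ rather than the exact power $r$, at no extra cost since the only hypothesis used is $\FT{\rho}\in\S(\R^n)$ (neither proof needs $\int\rho=1$ here). Your key kernel estimate is sound: on $\supp\eta\subseteq\{1/4\le|\xi|\le 2\}$ each derivative $\partial^\gamma_\xi\bigl[\eta(\xi)\FT{\rho}(\lambda\xi)\bigr]$ with $|\gamma|\le n+1$ is $O(\lambda^{-M})$ because the chain-rule factors $\lambda^{|\beta|}$ are absorbed by the decay of $\partial^\beta\FT{\rho}$ at $|\lambda\xi|\ge\lambda/4$, and the compact $\xi$-support converts this into $\sup_x(1+|x|)^{n+1}|h_\lambda(x)|=O(\lambda^{-M})$ and hence into the claimed $L^1$ bound; the final bookkeeping $t^{r}(\eps t)^{-N}\le\eps^{-r}$ for $N\ge r$ and $t>\eps^{-1}$ is also correct.
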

\begin{proof}
By definition of $\mC^s_\ast(\R^n)$ we have
\[
\Vert u\Vert_{\mC^s_\ast}=\Vert \varphi(D)u\Vert_\infty+\sup_{t>1}t^s\Vert\psi(D/t)u\Vert_\infty
\]
with $\varphi$ and $\psi$ as at the beginning of this subsection. By applying the operator $\varphi(D)$ to $u_\eps:=u\ast\rho_\eps$ we have that
\beq
\label{est_varphi(D)}
\Vert \varphi(D)(u_\eps)\Vert_\infty\le \Vert\varphi(D)u\Vert_\infty \Vert\rho_\eps\Vert_1\le \Vert\varphi(D)u\Vert_\infty.
\eeq
and
\[
\sup_{t>1}t^s\Vert\psi(D/t)u_\eps\Vert_\infty\le \sup_{t>1}t^s\Vert\psi(D/t)u\Vert_\infty\Vert\rho_\eps\Vert_1
\]
This means that $\Vert u_\eps\Vert_{\mC^{s}_\ast}\le \Vert u\Vert_{\mC^s_\ast}$ and therefore the case $r=0$ is trivial. Let us consider $r>0$ integer and let us take $\wt{\psi}\in\Cinfc(\R^n)$ with $\wt{\psi}=0$ near $0$ and $\wt{\psi}=1$ on $\supp(\psi)$. It follows that $\mF^{-1}\wt{\psi}$ has all the moments vanishing and that $\psi(D/t)u\ast\rho_\eps=\psi(D/t)u\ast\wt{\psi}(D/t)\rho_\eps$. Lemma \ref{lemma_gue}$(ii)$ applied to $\mF^{-1}\wt{\psi}$ allows us to find functions $\wt{\psi}_\alpha$ with $|\alpha|=r$ such that
\[
\mF^{-1}\wt{\psi}=\sum_{|\alpha|=r}D^\alpha(\mF^{-1}\wt{\psi}_\alpha)=
\sum_{|\alpha|=r}\mF^{-1}(\xi^\alpha\wt{\psi}_\alpha).
\]
Combining basic properties of the Fourier transform with the convolution we have that $\wt{\psi}(D/t)\rho_\eps$ can be written as
\[
t^{-r}\eps^{-r}\sum_{|\alpha|=r}\wt{\psi}_\alpha(D/t)(D^\alpha\rho)_\eps.
\]
This yields the estimate
\[
\Vert \psi(D/t)u_\eps\Vert_\infty\le t^{-r}\eps^{-r}\sum_{|\alpha|=r}\Vert\psi(D/t)u\ast\wt{\psi}_\alpha(D/t)(D^\alpha \rho)_\eps\Vert_\infty\le t^{-r-s}\eps^{-r}\Vert u\Vert_{\mC^s_\ast}\sum_{|\alpha|=r}\Vert\wt{\psi}_\alpha(D/t)(D^\alpha \rho)_\eps\Vert_1.
\]
Arguing as in the proof of Theorem 1.1 in \cite{GH:04} we see that when $t\ge\eps^{-1}$ any $\Vert\wt{\psi}_\alpha(D/t)(D^\alpha \rho)_\eps\Vert_1$ can be estimated by a constant $C$ depending only on $\rho$, $\wt{\psi}$, $r$ and $n$. In other words,
\beq
\label{first_est}
\sup_{t\ge\eps^{-1}}t^{r+s}\Vert \psi(D/t)u_\eps\Vert_\infty\le C\Vert u\Vert_{\mC^s_\ast}\eps^{-r}.
\eeq
Since,
\beq
\label{second_est}
\sup_{1<t\le\eps^{-1}}t^{r+s}\Vert\psi(D/t)u_\eps\Vert_\infty\le \sup_{1<t\le\eps^{-1}}t^r\Vert u\Vert_{\mC^s_\ast}\le\Vert u\Vert_{\mC^s_\ast}\eps^{-r},
\eeq
combining \eqref{first_est} with \eqref{second_est} we conclude that
\beq
\label{est_psi}
\sup_{t>1}t^{r+s}\Vert \psi(D/t)u_\eps\Vert_\infty\le C\Vert u\Vert_{\mC^s_\ast}\eps^{-r}.
\eeq
The estimates \eqref{est_varphi(D)} and \eqref{est_psi} show that there exists some constant $C$ such that for all $u\in\mC^s_\ast(\R^n)$,
\[
\Vert u_\eps\Vert_{\mC^{s+r}_\ast}\le C\eps^{-r}\Vert u\Vert_{\mC^s_\ast}.
\]
If now $r>0$ is not integer we have that the estimate
\[
\Vert \psi(D/t)u_\eps\Vert_\infty\le t^{-r'-s}\eps^{-r'}\Vert u\Vert_{\mC^s_\ast}\sum_{|\alpha|=r'}\Vert\wt{\psi}_\alpha(D/t)(D^\alpha g)_\eps\Vert_1.
\]
is valid for some integer $r'\ge r$. Under the hypothesis of $t\ge\eps^{-1}$ this leads to
\[
\Vert \psi(D/t)u_\eps\Vert_\infty\le C\Vert u\Vert_{\mC^s_\ast}t^{-r-s}\eps^{-r}(t\eps)^{-r'+r}\le C\Vert u\Vert_{\mC^s_\ast}t^{-r-s}\eps^{-r}.
\]
Since \eqref{est_varphi(D)} and \eqref{second_est} hold for every $r>0$ the proof is complete.
\end{proof}
\begin{corollary}
\label{corol_ast}
If $u\in\mC^s_\ast(\R^n)$ and $s+r>0$ then there exists a constant $C$ depending only on $r$ such that
\[
\Vert u\ast\rho_\eps\Vert_\infty\le C\eps^{-r}\Vert u\Vert_{\mC^s_\ast(\R^n)}.
\]
\end{corollary}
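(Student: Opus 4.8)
The plan is to deduce the corollary directly from Proposition \ref{prop_ast} together with the elementary embedding $\mC^\sigma_\ast(\R^n)\hookrightarrow L^\infty(\R^n)$, which is valid for every $\sigma>0$, applied with the choice $\sigma=s+r$. In other words, the corollary is the composition of two facts: the regularisation gains regularity at the price of a factor $\eps^{-r}$ (Proposition \ref{prop_ast}), and positive Zygmund regularity controls the sup-norm.

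First I would record the embedding. For $v\in\mC^\sigma_\ast(\R^n)$ with $\sigma>0$, I start from the continuous Littlewood--Paley reconstruction
\[
v=\varphi(D)v+\int_1^{+\infty}\psi(D/t)v\,\frac{dt}{t},
\]
which holds in $\S'(\R^n)$. Taking $L^\infty$-norms and inserting the defining bounds $\Vert\varphi(D)v\Vert_\infty\le\Vert v\Vert_{\mC^\sigma_\ast}$ and $\Vert\psi(D/t)v\Vert_\infty\le t^{-\sigma}\Vert v\Vert_{\mC^\sigma_\ast}$ yields
\[
\Vert v\Vert_\infty\le\Vert v\Vert_{\mC^\sigma_\ast}+\Vert v\Vert_{\mC^\sigma_\ast}\int_1^{+\infty}t^{-\sigma-1}\,dt=\Bigl(1+\tfrac1\sigma\Bigr)\Vert v\Vert_{\mC^\sigma_\ast}.
\]
The hypothesis $\sigma=s+r>0$ is precisely what makes $\int_1^{+\infty}t^{-\sigma-1}\,dt=\sigma^{-1}$ finite, so it also guarantees that the right-hand side of the reconstruction converges absolutely in $L^\infty$ and hence that $v$ agrees almost everywhere with an $L^\infty$ function. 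This convergence remark is the only point that requires a little care.

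Then I would simply combine the two estimates. Applying the embedding to $v=u\ast\rho_\eps$ with $\sigma=s+r$ and invoking Proposition \ref{prop_ast} to control $\Vert u\ast\rho_\eps\Vert_{\mC^{s+r}_\ast}$ gives
\[
\Vert u\ast\rho_\eps\Vert_\infty\le\Bigl(1+\tfrac1{s+r}\Bigr)\Vert u\ast\rho_\eps\Vert_{\mC^{s+r}_\ast}\le\Bigl(1+\tfrac1{s+r}\Bigr)C\,\eps^{-r}\Vert u\Vert_{\mC^s_\ast},
\]
which is the asserted inequality, with a constant absorbing the factor $1+(s+r)^{-1}$ and the constant furnished by Proposition \ref{prop_ast}. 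Beyond the convergence observation above there is essentially no obstacle: once the embedding is in place the corollary is a one-line consequence. The only thing to keep straight is the bookkeeping of $\eps$-powers, already carried out in Proposition \ref{prop_ast}, where $r\ge0$ plays the role of the regularity gain and the strict positivity $s+r>0$ is used solely to make the Zygmund-to-$L^\infty$ integral converge.
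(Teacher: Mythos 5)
Your proposal is correct and follows exactly the route of the paper's own proof: combine Proposition \ref{prop_ast} with the embedding $\mC^{\sigma}_\ast(\R^n)\hookrightarrow L^\infty(\R^n)$ for $\sigma=s+r>0$, which the paper likewise justifies by the continuous Littlewood--Paley reconstruction formula. Your explicit derivation of the embedding constant $1+(s+r)^{-1}$ is a welcome elaboration of the step the paper only cites.
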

\begin{proof}
This corollary is easily proved by combining Proposition \ref{prop_ast} with the embedding $\mC^t_\ast\subseteq L^\infty$ valid for $t>0$ (see the decomposition formula for tempered distributions at the beginning of this subsection or \cite[2.3.2, Remark 3]{Triebel:II}).
\end{proof}
\begin{remark}
\label{rem_gue}
Corollary \ref{corol_ast} yields the estimate obtained by H\"ormann in \cite{GH:04} for the net $\Vert\partial^\alpha(u\ast\rho_\eps)\Vert_\infty$ when $\alpha=0$ and $s\neq 0$ (see Definition 3 and Theorem 7 in \cite{GH:04}). Note that by assuming that the mollifier $\rho$ has vanishing moments $\int x^\alpha \rho(x)\, dx$ when $\alpha\neq 0$, H\"ormann has proved a more precise estimate of the norm $\Vert u\ast\rho_\eps\Vert_\infty$ when $s=0$, namely $\Vert u\ast\rho_\eps\Vert_\infty=O(\log(1/\eps))$ as $\eps\to 0$.
\end{remark}

\subsection{Nets of linear operators and interpolation couples}
We conclude this first section by considering a net of operators $(T_\eps)_{\eps\in(0,1]}$ acting on the complex interpolation of a couple $\{A_0,A_1\}$ of Banach spaces. The notions of this subsections will be employ in Section \ref{sec_Taylor} for proving results of Sobolev boundedness.

We begin by recalling that given $A_0$ and $A_1$ complex Banach spaces, the couple $\{A_0,A_1\}$ is called interpolation couple if there exists a linear complex Hausdorff space $A$ such that both $A_0$ and $A_1$ are linearly and continuously embedded in $A$. It follows that $A_0+A_1$ is a well defined subset of $A$. In addition $A_0+A_1$ is a quasi-Banach space with respect to the quasi-norm $\Vert a\Vert=\inf \Vert a_0\Vert_{A_0}+\Vert a_1\Vert_{A_1}$, where the infimum is taken over all the representations $a=a_0+a_1$ with $a_0\in A_0$ and $a_1\in A_1$. Referring to \cite[Section 1.6]{Triebel:II} we now define the set of functions $F[A]$.
\begin{definition}
\label{def_FA}
Let $\{A_0,A_1\}$ be an interpolation couple of Banach spaces, $A=A_0+A_1$ and $\sigma=\{z\in\C: 0<\Re\, z<1\}$. $F[A]$ denotes the collection of all function $f$ on $\overline{\sigma}$ with values in $A$ such that
\begin{itemize}
\item[(i)] $f$ is $A$-continuous on $\overline{\sigma}$ and $A$-analytic in $\sigma$ with $\sup_{z\in\overline{\sigma}}\Vert f(z)\Vert_A <\infty$,
\item[(ii)] $f(it)\in A_0$ and  $f(1+it)\in A_1$ for all $t\in\R$, the corresponding operators from $\R$ to $A_0$ and $A_1$, respectively, are continuous and
\[
\Vert f\Vert_{F[A]}=\sup_{t\in\R}(\Vert f(it)\Vert_{A_0}+\Vert f(1+it)\Vert_{A_1})<\infty.
\]
\end{itemize}
\end{definition}
Note that $F[A]$ is a Banach space for the topology induced by the norm above.
\begin{definition}
\label{def_interp}
Let $\{A_0,A_1\}$ be an interpolation couple of Banach spaces. Let $A=A_0+A_1$ and $\theta\in(0,1)$. The interpolation space $[A_0,A_1]_\theta$ is the set of all $a\in A$ such that there exists $f\in F[A]$ with $f(\theta)=a$.
\end{definition}
$[A_0,A_1]_\theta$ is a Banach space with respect to the norm $\Vert a\Vert_{[A_0,A_1]_\theta}=\inf_f \Vert f\Vert_{F[A]}$, where the infimum is taken over all $f\in F[A]$ with $f(\theta)=a$.
\begin{proposition}
\label{prop_interpolation}
Let $\{A_0,A_1\}$ and $\{B_0,B_1\}$ be two interpolation couples and let $(T_\eps)_{\eps\in(0,1]}$ be a family of linear operators from $A=A_0+A_1$ to $B=B_0+B_1$ such that $T_\eps:A_j\to B_j$ is continuous for $j=0,1$ and all $\eps\in(0,1]$, i.e., 
\[
\forall j=0,1\, \exists(\omega_{j,\eps})_\eps\in\R^{(0,1]}\, \forall a\in A_j\, \forall\eps\in(0,1]\quad \Vert T_\eps a\Vert_{B_j}\le \omega_{j,\eps}\Vert a\Vert_{A_j}.
\]
Hence, for all $\theta\in(0,1)$ and all $\eps\in(0,1]$, the operator $T_\eps$ maps $[A_0,A_1]_\theta$ into $[B_0,B_1]_\theta$ and the inequality
\[
\Vert T_\eps a\Vert_{[B_0,B_1]_\theta}\le \max(\omega_{0,\eps},\omega_{1,\eps})\Vert a\Vert_{[A_0,A_1]_\theta}
\]
holds for all $a\in[A_0,A_1]_\theta$ and all $\eps\in(0,1]$.
\end{proposition}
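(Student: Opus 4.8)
The plan is to reduce the statement to the defining property of the complex interpolation functor by pushing an almost-optimal analytic representative of $a$ through the operator $T_\eps$. Fix $\theta\in(0,1)$, $\eps\in(0,1]$ and $a\in[A_0,A_1]_\theta$. Given $\delta>0$, I would choose $f\in F[A]$ with $f(\theta)=a$ and $\Vert f\Vert_{F[A]}\le \Vert a\Vert_{[A_0,A_1]_\theta}+\delta$, which exists by Definition \ref{def_interp}. Setting $g:=T_\eps\circ f$, i.e. $g(z)=T_\eps(f(z))$ for $z\in\ovl{\sig}$, the goal is to show $g\in F[B]$ with $g(\theta)=T_\eps a$, so that $T_\eps a\in[B_0,B_1]_\theta$ and $\Vert T_\eps a\Vert_{[B_0,B_1]_\theta}\le\Vert g\Vert_{F[B]}$, and then to estimate $\Vert g\Vert_{F[B]}$.

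First I would record that $T_\eps$ is bounded as a map of the sum spaces $A=A_0+A_1\map B=B_0+B_1$. Indeed, writing any $c\in A$ as $c=c_0+c_1$ with $c_j\in A_j$ we have $\Vert T_\eps c\Vert_B\le\Vert T_\eps c_0\Vert_{B_0}+\Vert T_\eps c_1\Vert_{B_1}\le\omega_{0,\eps}\Vert c_0\Vert_{A_0}+\omega_{1,\eps}\Vert c_1\Vert_{A_1}\le\max(\omega_{0,\eps},\omega_{1,\eps})(\Vert c_0\Vert_{A_0}+\Vert c_1\Vert_{A_1})$, and taking the infimum over all such representations gives $\Vert T_\eps c\Vert_B\le\max(\omega_{0,\eps},\omega_{1,\eps})\Vert c\Vert_A$. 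Being bounded and linear, $T_\eps$ preserves both the $A$-continuity and the $A$-analyticity of $f$: continuity is immediate, while for analyticity one checks, using the continuity of $T_\eps$, that the difference quotients $h^{-1}(g(z+h)-g(z))=T_\eps\bigl(h^{-1}(f(z+h)-f(z))\bigr)$ converge, so that $g$ is holomorphic on $\sig$ with $g'=T_\eps f'$. This verifies property (i) of Definition \ref{def_FA} for $g$, since moreover $\sup_{z\in\ovl\sig}\Vert g(z)\Vert_B\le\max(\omega_{0,\eps},\omega_{1,\eps})\sup_{z\in\ovl\sig}\Vert f(z)\Vert_A<\infty$.

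Next I would verify property (ii) together with the norm estimate. Because $f(it)\in A_0$ and $f(1+it)\in A_1$, the hypotheses $T_\eps:A_j\map B_j$ give $g(it)=T_\eps f(it)\in B_0$ and $g(1+it)=T_\eps f(1+it)\in B_1$, with the boundary maps $t\mapsto g(it)$ and $t\mapsto g(1+it)$ continuous into $B_0$ and $B_1$ (again because $T_\eps$ is continuous on each $A_j$). The quantitative estimates $\Vert g(it)\Vert_{B_0}\le\omega_{0,\eps}\Vert f(it)\Vert_{A_0}$ and $\Vert g(1+it)\Vert_{B_1}\le\omega_{1,\eps}\Vert f(1+it)\Vert_{A_1}$ then yield, for each $t\in\R$,
\[
\Vert g(it)\Vert_{B_0}+\Vert g(1+it)\Vert_{B_1}\le\max(\omega_{0,\eps},\omega_{1,\eps})\bigl(\Vert f(it)\Vert_{A_0}+\Vert f(1+it)\Vert_{A_1}\bigr),
\]
and taking the supremum over $t$ gives $\Vert g\Vert_{F[B]}\le\max(\omega_{0,\eps},\omega_{1,\eps})\Vert f\Vert_{F[A]}$. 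Hence $g\in F[B]$.

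Finally, since $g(\theta)=T_\eps(f(\theta))=T_\eps a$, Definition \ref{def_interp} shows $T_\eps a\in[B_0,B_1]_\theta$ and
\[
\Vert T_\eps a\Vert_{[B_0,B_1]_\theta}\le\Vert g\Vert_{F[B]}\le\max(\omega_{0,\eps},\omega_{1,\eps})\bigl(\Vert a\Vert_{[A_0,A_1]_\theta}+\delta\bigr);
\]
letting $\delta\to0$ gives the claimed inequality for every $\eps\in(0,1]$. I expect the only genuine point requiring care to be the transfer of $A$-analyticity to $B$-analyticity under $T_\eps$ (property (i) for $g$); everything else reduces to the two boundary hypotheses and the elementary bound $\omega_0\,x+\omega_1\,y\le\max(\omega_0,\omega_1)(x+y)$. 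I would also remark that the non-sharp constant $\max(\omega_{0,\eps},\omega_{1,\eps})$ is precisely what makes this multiplier-free argument work: obtaining the sharp geometric mean $\omega_{0,\eps}^{1-\theta}\omega_{1,\eps}^{\theta}$ would require replacing $g$ by $z\mapsto(\omega_{0,\eps}/\omega_{1,\eps})^{z-\theta}T_\eps f(z)$, and for the present application the cruder bound is all that is needed.
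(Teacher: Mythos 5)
Your proof is correct and follows essentially the same route as the paper: push an analytic representative $f\in F[A]$ of $a$ through $T_\eps$, check $T_\eps\circ f\in F[B]$, and use the elementary bound $\omega_0 x+\omega_1 y\le\max(\omega_0,\omega_1)(x+y)$ on the boundary norms before passing to the infimum (your near-optimal-$f$-plus-$\delta$ device is just a rephrasing of taking the infimum at the end). The only difference is that you spell out the verification that $T_\eps\circ f$ satisfies properties (i) and (ii) of Definition \ref{def_FA}, which the paper dismisses with ``it is easy to see''; that added care, and your closing remark about the sharp geometric-mean constant, are welcome but do not change the argument.
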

\begin{proof}
We begin by noting that $T_\eps$ is continuous from $A$ to $B$. Indeed, by working with any representation $a_0+a_1$ of $a$ we get
\[
\Vert T_\eps a\Vert_B\le \Vert T_\eps a_0\Vert_{B_0}+\Vert T_\eps a_1\Vert_{B_1}\le \max(\omega_{0,\eps},\omega_{1,\eps})(\Vert a_0\Vert_{A_0}+\Vert a_1\Vert_{A_1})\le \max(\omega_{0,\eps},\omega_{1,\eps})\Vert a\Vert_A.
\]
It is easy to see that if $f\in F[A]$ then $T_\eps\circ f\in F[B]$ for all $\eps$. By definition of the norm $\Vert\cdot\Vert_{[B_0,B_1]_\theta}$ we have that
\[
\Vert T_\eps a\Vert_{[B_0,B_1]_\theta}\le \sup_{t\in\R}\Vert g(it)\Vert_{B_0}+\Vert g(1+it)\Vert_{B_1}
\]
for all $g\in F[B]$ with $g(\theta)=T_\eps a$. It follows that for $f\in F[A]$ with $f(\theta)=a$ we can write
\[
\Vert T_\eps a\Vert_{[B_0,B_1]_\theta}\le \sup_{t\in\R}\Vert (T_\eps\circ f)(it)\Vert_{B_0}+\Vert (T_\eps\circ f)(1+it)\Vert_{B_1}.
\]
The continuity of the operator $T_\eps$ restricted to $A_0$ and $A_1$ yields
\beq
\label{inter_ineq}
\Vert T_\eps a\Vert_{[B_0,B_1]_\theta}\le \sup_{t\in\R}(\omega_{0,\eps}\Vert f(it)\Vert_{A_0}+\omega_{1,\eps}\Vert f(1+it)\Vert_{A_1})\le \max(\omega_{0,\eps},\omega_{1,\eps})\Vert f\Vert_{F[A]}.
\eeq
Since \eqref{inter_ineq} holds for all $f\in F[A]$ with $f(\theta)=a$ we conclude that
\[
\Vert T_\eps a\Vert_{[B_0,B_1]_\theta}\le \max(\omega_{0,\eps},\omega_{1,\eps})\inf_{f\in F[A],\, f(\theta)=a}\Vert f\Vert_{F[A]}=\max(\omega_{0,\eps},\omega_{1,\eps})\Vert a\Vert_{[A_0,A_1]}.
\]
\end{proof}
In this paper we are mainly interested in the interpolation of Sobolev spaces. We recall that, for $s\in\R$ and $p\in(1,\infty)$, the Sobolev space $H^s_p(\R^n)$ is the set of all distributions $u\in \S'(\R^n)$ such that $\lara{D_x}^s u\in L^p$. It is a Banach space when equipped with the norm $\Vert u\Vert_{H^{s}_p}=\Vert\lara{D_x}^s u\Vert_{L^p}$. As shown in \cite[p.40]{Triebel:II}, for $p\in(1,\infty)$, $s_0,s_1\in\R$ and $\theta\in(0,1)$ one has
\beq
\label{inter_Sob}
[H^{s_0}_p,H^{s_1}_p]_\theta =H^s_p,
\eeq
with $s=(1-\theta)s_0+\theta s_1$.

\section{$L^p$ and Sobolev boundedness of pseudodifferential operators with symbol in $\mC^r_\ast S^m_{1,1}(\R^{2n})$}
\label{sec_Taylor}
In the sequel we will consider a symbol $a\in \mC^r_\ast S^m_{1,1}(\R^{2n})$, i.e., a function $a(x,\xi)$ which is smooth in $\xi$ and of class $\mC^r_\ast$ in $x$ such that the following estimates hold:
\[
\forall \alpha\in\N^n\, \exists c_\alpha>0\qquad\quad \Vert D^\alpha_\xi a(\cdot,\xi)\Vert_\infty\le c_\alpha\lara{\xi}^{m-|\alpha|},
\]
\[
\forall \alpha\in\N^n\, \exists C_\alpha>0\qquad\quad \Vert D^\alpha_\xi a(\cdot,\xi)\Vert_{\mC^r_\ast}\le C_\alpha\lara{\xi}^{m-|\alpha|+r}.
\]
Let $h\ge 0$. It is clear from Proposition \ref{prop_ast} that if we convolve $a\in \mC^r_\ast S^m_{1,1}(\R^{2n})$ with a mollifier $\rho_\eps$ we obtain a net of symbols $a_\eps(x,\xi)=(a(\cdot,\xi)\ast\rho_\eps)(x)\in \mC^{r+h}_\ast S^m_{1,1}(\R^{2n})$ such that
\[
\forall \alpha\in\N^n\, \exists c_\alpha>0\qquad\quad \Vert D^\alpha_\xi a_\eps(x,\xi)\Vert_\infty\le \Vert\rho_\eps\Vert_1 \Vert D^\alpha_\xi a(x,\xi)\Vert_\infty \le c_\alpha\lara{\xi}^{m-|\alpha|},
\]
and
\[
\forall \alpha\in\N^n\, \exists C_\alpha>0\qquad\quad \Vert D^\alpha_\xi a_\eps(\cdot,\xi)\Vert_{\mC^{r+h}_\ast}\le C_\alpha\eps^{-h}\Vert D^\alpha_\xi a(\cdot,\xi)\Vert_{\mC^r_\ast}\le C_\alpha\eps^{-h} \lara{\xi}^{m-|\alpha|+r}.
\]
We recall the following boundedness result of M. Taylor \cite[Theorem 2.1.A.]{Taylor:91}: If $r>0$ and $p\in(1,\infty)$, then for $a(x,\xi)\in \mC^r_\ast S^m_{1,1}(\R^{2n})$,
\[
a(x,D):H^{s+m,p}\to H^{s,p}
\]
provided $s\in(0,r)$.

Our goal is to drop the restriction on $s$ by working with the convolved symbol $a_\eps$ and to obtain precise continuity estimates of $a_\eps(x,D)$. We already know that since $a_\eps\in \mC^{r+h}_\ast S^m_{1,1}(\R^{2n})$ for any $h\ge 0$ the corresponding net of operators maps $H^{s+m,p}$ into $H^{s,p}$ when $s$ belongs to the interval $(0,r+h)$. This means that by convolution we are able to enlarge the $s$-interval of any positive real number $h$. A precise estimate of the Sobolev boundedness of the operator $a_\eps(x,D)$ requires a decomposition into elementary symbols and some preliminary work involving $S^1_0$-partitions of unity as in \cite[Chapter 1]{Taylor:91}.

\subsection{$S^1_0$-partition of unity and equivalent Sobolev norm}
\begin{definition}
\label{def_partition}
We say that a family of real valued smooth functions $(\psi_j)_j$ is a $S^1_0$-partition of unity (or Littlewood-Paley partition of unity) if
\begin{itemize}
\item[(i)] $\psi_0(\xi)=1$ for $|\xi|\le 1$ and $\psi_0(\xi)=0$ for $|\xi|\ge 2$;
\item[(ii)] for each $j\ge 1$,
\[
\psi_j(\xi)=\psi_0(2^{-j}\xi)-\psi_0(2^{-j+1}\xi)=\psi_1(2^{-j+1}\xi);
\]
\item[(iii)] $\sum_j\psi_j(x)=1$ for every $x\in\R^n$.
\end{itemize}
\end{definition}
Note that $\supp\, \psi_j\subseteq\{\xi:\, 2^{j-1}\le|\xi|\le 2^{j+1}\}$ for all $j\ge 1$ and by $(ii)$, for all $\alpha\in\N^n$ there exists  $c_\alpha>0$ such that
\[
\Vert D^\alpha\psi_j\Vert_\infty\le c_\alpha 2^{-j|\alpha|}
\]
for all $j\in\N$. In addition $\{\psi_j(D):\,  j\in\N\}$ and $\{\sum_{l\le j}\psi_l(D):\, j\in\N\}$ are uniformly bounded on $L^\infty$.
\begin{proposition}
\label{prop_partition}
Let $(\psi_j)_j$ be a $S^1_0$-partition of unity. Then there exists a constant $c>0$ such that
\[
\Vert \psi_j(D)f\Vert_\infty\le c\Vert f\Vert_\infty
\]
and
\[
\Vert \sum_{l\le j}\psi_l(D)f\Vert_\infty\le c\Vert f\Vert_\infty,
\]
for all $j\in\N$ and $f\in L^\infty(\R^n)$.
\end{proposition}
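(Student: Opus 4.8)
The plan is to realise each operator as convolution with an $L^1$ kernel and then invoke Young's inequality, the essential point being that all the kernels involved are dilates of two fixed Schwartz functions, so their $L^1$ norms are independent of $j$.

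First I would write $\psi_j(D)f = g_j\ast f$ with $g_j = \mF^{-1}\psi_j$, so that Young's inequality gives $\Vert\psi_j(D)f\Vert_\infty \le \Vert g_j\Vert_1\,\Vert f\Vert_\infty$; everything then reduces to a uniform bound on $\Vert g_j\Vert_1$. For $j\ge 1$ the defining relation $\psi_j(\xi) = \psi_1(2^{-(j-1)}\xi)$ combined with the dilation rule for the Fourier transform yields $g_j(x) = 2^{(j-1)n}(\mF^{-1}\psi_1)(2^{j-1}x)$, and the substitution $y = 2^{j-1}x$ shows $\Vert g_j\Vert_1 = \Vert\mF^{-1}\psi_1\Vert_1$, which does not depend on $j$. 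Since $\psi_0,\psi_1\in\Cinfc(\R^n)\subset\S(\R^n)$, their inverse Fourier transforms lie in $\S(\R^n)\subset L^1(\R^n)$, so both $\Vert\mF^{-1}\psi_0\Vert_1$ and $\Vert\mF^{-1}\psi_1\Vert_1$ are finite. Taking $c = \max(\Vert\mF^{-1}\psi_0\Vert_1,\Vert\mF^{-1}\psi_1\Vert_1)$ settles the first inequality, the case $j=0$ being covered by the $\psi_0$ term.

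For the partial sums I would exploit the telescoping structure of the partition. Using $\psi_l(\xi) = \psi_0(2^{-l}\xi) - \psi_0(2^{-l+1}\xi)$ for $l\ge 1$, the sum $\sum_{l\le j}\psi_l(\xi)$ collapses to $\psi_0(2^{-j}\xi)$. Hence $\sum_{l\le j}\psi_l(D)$ is convolution with the single kernel $2^{jn}(\mF^{-1}\psi_0)(2^j\,\cdot)$, whose $L^1$ norm equals $\Vert\mF^{-1}\psi_0\Vert_1$ by the same scaling computation as above. Young's inequality then delivers the second bound with the same constant $c$.

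There is essentially no serious obstacle here: the argument rests on the scale invariance of the $L^1$ norm of a dilated kernel together with the telescoping identity for the partition. The only point requiring a little care is the bookkeeping of the dilation factors, so that the $j$-dependent powers of $2$ cancel exactly after the change of variables; once this is verified the uniform boundedness of both families is immediate.
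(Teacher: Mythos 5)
Your proof is correct and follows essentially the same route as the paper: realise $\psi_j(D)$ and $\sum_{l\le j}\psi_l(D)=\psi_0(2^{-j}D)$ as convolutions with dilates of the fixed kernels $\mF^{-1}\psi_1$ and $\mF^{-1}\psi_0$, observe that the $L^1$ norm is invariant under the $L^1$-normalised dilation, and conclude by Young's inequality. The only difference is notational (the paper writes the kernel via $\widehat{\psi_j}$ and a reflection of $f$, with the $(2\pi)^{-n}$ factor absorbed into the constant), which is immaterial.
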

\begin{proof}
We begin by observing that $\psi_j(D)f(x)$ can be written as $(2\pi)^{-n} \widehat{\psi_j}\ast\wt{f}(-x)$, where $\wt{f}(x)=f(-x)$. Hence
\[
\Vert \psi_j(D)f\Vert_\infty\le  (2\pi)^{-n}\Vert f\Vert_\infty \Vert\widehat{\psi_j}\Vert_1.
\]
Since $\widehat{\psi_j}(\xi)=2^{(j-1)n}\widehat{\psi_1}(2^{j-1}\xi)$ we get
\[
\Vert \psi_j(D)f\Vert_\infty\le  (2\pi)^{-n}\Vert \widehat{\psi_1}\Vert_1\Vert f\Vert_\infty,
\]
for all $j\ge 1$. Hence
\[
\Vert \psi_j(D)f\Vert_\infty\le  (2\pi)^{-n}\max(\Vert \widehat{\psi_1}\Vert_1, \Vert \widehat{\psi_0}\Vert_1)\Vert f\Vert_\infty
\]
for all $j\in\N$. Note that $\sum_{l\le j}\psi_l(D)f=\psi_{0,j}(D)f$, with $\psi_{0,j}(\xi)=\psi_0(2^{-j}\xi)$. Arguing as above we obtain the estimate
\[
\Vert\sum_{l\le j}\psi_l(D)f\Vert_\infty\le  (2\pi)^{-n}\Vert f\Vert_\infty \Vert\widehat{\psi_{0,j}}\Vert_1\le (2\pi)^{-n}\Vert \widehat{\psi_0}\Vert_1\Vert f\Vert_\infty
\]
which completes the proof.
\end{proof}

The following technical lemmas will be employed in proving Theorem \ref{prop_elem_symb}. We refer to \cite[Theorem 2.5.6]{Triebel:I}, \cite[Lemma 1.2]{Marschall:88} and \cite[Appendix A]{Taylor:91} for the corresponding proofs.
\begin{lemma}
\label{lemma_sob}
For any $S^1_0$-partition of unity $(\psi_j)_j$, any $p\in(1,\infty)$ and $s\in\R$ the norms $\Vert\cdot\Vert_{H^{s,p}}$ and
\[
\big\Vert\{\sum_{j=0}^\infty 4^{js}|\psi_j(D)(\cdot)|^2\}^{\frac{1}{2}}\big\Vert_{L^p}
\]
are equivalent.
\end{lemma}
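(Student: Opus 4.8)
The plan is to reduce the two-sided norm equivalence to the vector-valued Littlewood--Paley (Mikhlin--H\"ormander) multiplier theorem. First I would strip off the order $s$ by the substitution $v=\lara{D}^s u$, so that $\Vert u\Vert_{H^{s,p}}=\Vert v\Vert_{L^p}$ while $2^{js}\psi_j(D)u=a_j(D)v$ with the symbol $a_j(\xi)=2^{js}\lara{\xi}^{-s}\psi_j(\xi)$. Since $4^{js}=(2^{js})^2$, the asserted equivalence is exactly
\[
\Vert v\Vert_{L^p}\approx\Big\Vert\Big(\sum_{j=0}^\infty|a_j(D)v|^2\Big)^{1/2}\Big\Vert_{L^p},
\]
i.e.\ the boundedness, above and below, of the operator $v\mapsto(a_j(D)v)_j$ from $L^p$ to $L^p(\ell^2)$.

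The decisive point is that the family $(a_j)_j$ satisfies \emph{uniform} Mikhlin bounds. For $j\ge1$ we have $\supp a_j\subseteq\{2^{j-1}\le|\xi|\le 2^{j+1}\}$, so on that annulus $2^{js}\lara{\xi}^{-s}$ together with all its $\xi$-derivatives is bounded independently of $j$; meanwhile the estimates recorded after Definition \ref{def_partition} give $\Vert D^\al\psi_j\Vert_\infty\le c_\al 2^{-j|\al|}\approx c_\al\lara{\xi}^{-|\al|}$ on $\supp\psi_j$. Combining these by the Leibniz rule yields
\[
\sup_j\ \sup_\xi\ \lara{\xi}^{|\al|}\,|\partial^\al a_j(\xi)|<\infty\qquad\text{for every }\al,
\]
which is precisely the hypothesis of the scalar Mikhlin theorem (the term $j=0$ is a single fixed $\S$-multiplier and is harmless).

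For the upper bound I would invoke the $\ell^2$-valued form of the multiplier theorem, which gives $\Vert(\sum_j|a_j(D)v|^2)^{1/2}\Vert_{L^p}\le C\Vert v\Vert_{L^p}$ for $p\in(1,\infty)$. If one wants this self-contained, it follows from the scalar bound by randomisation: for Rademacher functions $(r_j)$ the symbol $\sum_j r_j(\omega)a_j(\xi)$ again satisfies the Mikhlin bounds uniformly in $\omega$ (each $\xi$ meets only finitely many of the annuli), so $\sum_j r_j(\omega)a_j(D)$ is bounded on $L^p$ uniformly in $\omega$; integrating in $\omega$ and applying Khintchine's inequality converts the randomised sum into the $\ell^2$ square function.

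For the lower bound I would argue by duality. Pick a fattened partition $(\Psi_j)_j$ with $\Psi_j\equiv1$ on $\supp\psi_j$, the same annular scaling, and hence the same uniform Mikhlin bounds, and set $b_j(\xi)=2^{-js}\lara{\xi}^s\Psi_j(\xi)$, so that $a_jb_j=\psi_j$. Using $\sum_j\psi_j=1$, the commutation of Fourier multipliers, and the self-adjointness of the (real-symbol) multipliers, one writes, for $g\in L^{p'}$,
\[
\dis{v}{g}=\Big\langle\sum_j\psi_j(D)v,\,g\Big\rangle=\sum_j\dis{a_j(D)v}{b_j(D)g}.
\]
Pointwise Cauchy--Schwarz in $j$ followed by H\"older in $x$ bounds $|\dis{v}{g}|$ by $\Vert(\sum_j|a_j(D)v|^2)^{1/2}\Vert_{L^p}$ times $\Vert(\sum_j|b_j(D)g|^2)^{1/2}\Vert_{L^{p'}}$, and the second factor is $\le C\Vert g\Vert_{L^{p'}}$ by the same vector multiplier bound applied to $(b_j)_j$ in $L^{p'}$ (this is where $p'\in(1,\infty)$ enters). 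Taking the supremum over $\Vert g\Vert_{L^{p'}}\le1$ gives the reverse estimate and closes the equivalence. The genuinely substantial input is the vector-valued multiplier theorem feeding both directions; verifying the uniform Mikhlin bounds for $(a_j)_j$ and $(b_j)_j$ is routine once the annular localisation is exploited, and the duality step is then purely formal, so the main obstacle is precisely the passage from the scalar to the $\ell^2$-valued bound (whether cited from Benedek--Calder\'on--Panzone theory or obtained via the Khintchine randomisation above).
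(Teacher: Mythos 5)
Your proof is correct, and since the paper does not prove this lemma at all --- it simply cites \cite[Theorem 2.5.6]{Triebel:I}, \cite[Lemma 1.2]{Marschall:88} and \cite[Appendix A]{Taylor:91} --- your argument is essentially a reconstruction of the standard proof contained in those references: reduction to $a_j(\xi)=2^{js}\lara{\xi}^{-s}\psi_j(\xi)$ with uniform Mikhlin bounds, the $\ell^2$-valued multiplier theorem (or Khintchine randomisation) for the upper bound, and duality for the lower bound. The only point worth tidying is the transpose in the duality step: the $\psi_j$ are assumed real but not necessarily even, so the adjoint of $b_j(D)$ is the multiplier with symbol $b_j(-\xi)$ (or $\overline{b_j}$ for the sesquilinear pairing), which satisfies the same uniform bounds, so nothing is lost.
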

\begin{remark}
\label{rem_Marschall}
Note that when $(\psi_j)_j$ is a family of $\Cinfc$ functions such that $\supp\, \psi_0\subseteq\{\xi:\, |\xi|\le 2\}$ and $\supp\, \psi_j\subseteq\{\xi:\, 2^{j-1}\le|\xi|\le 2^{j+1}\}$ then there exists a constant $C>0$ such that
\[
\big\Vert\{\sum_{j=0}^\infty 4^{js}|\psi_j(D)u|^2\}^{\frac{1}{2}}\big\Vert_{L^p}\le C\Vert u\Vert_{H^{s,p}}
\]
for all $u\in H^{s,p}$. This result can be found in \cite[p.340]{Marschall:88} and is obtained by applying the multiplier theorem 2.5.6 in \cite{Triebel:I}.
\end{remark}
\begin{lemma}
\label{lemma_Taylor_2}
For any $p\in(1,\infty)$ and $s>0$ there exists a constant $C>0$ such that for all sequences $(f_k)_k$ of distributions in $\S'(\R^n)$ with $\supp\widehat{f_k}\subseteq\{\xi:\, |\xi|\le A 2^{k+1}\}$ for some $A>0$ and for all $k\in\N$, the following inequality holds:
\[
\big\Vert\sum_{k=0}^\infty f_k\big\Vert_{H^{s,p}}\le C\big\Vert\{\sum_{k=0}^\infty 4^{ks}|f_k|^2\}^{\frac{1}{2}}\big\Vert_{L^p}.
\]
\end{lemma}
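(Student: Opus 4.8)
The plan is to pass to the Littlewood--Paley square-function norm furnished by Lemma \ref{lemma_sob}, use the frequency localisation of the $f_k$ to collapse the action of the partition of unity into a single shifted sum, and then control each shift by a uniform vector-valued maximal estimate, the summability over shifts being exactly what the hypothesis $s>0$ provides. We may assume the right-hand side is finite, and it is cleanest to argue first for finite sums $\sum_{k\le K}f_k$ (where every manipulation below is trivially justified) and to pass to the limit $K\to\infty$ at the end. Fix an $S^1_0$-partition of unity $(\psi_j)_j$ and set $u=\sum_k f_k$. By the norm equivalence in Lemma \ref{lemma_sob} it suffices to bound $\big\Vert\{\sum_{j}4^{js}|\psi_j(D)u|^2\}^{1/2}\big\Vert_{L^p}$ by the right-hand side. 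Since $\supp\psi_j\subseteq\{2^{j-1}\le|\xi|\le 2^{j+1}\}$ for $j\ge1$ while $\supp\widehat{f_k}\subseteq\{|\xi|\le A2^{k+1}\}$, the product $\psi_j(\xi)\widehat{f_k}(\xi)$ vanishes unless $2^{j-1}\le A2^{k+1}$, that is, unless $j\le k+N$ for a fixed integer $N=N(A)\ge 2+\log_2 A$. Hence
\[
\psi_j(D)u=\sum_{k\ge j-N}\psi_j(D)f_k,
\]
a sum in which only shifts $m:=k-j\ge -N$ occur.

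Next I would reorganise this as a sum over the single shift parameter $m$. Writing $4^{js/2}=4^{-ms/2}4^{ks/2}$ with $k=j+m$, applying the triangle inequality in $\ell^2_j$ pointwise in $x$, and then Minkowski's inequality in $L^p$, one obtains
\[
\big\Vert\{\sum_{j}4^{js}|\psi_j(D)u|^2\}^{1/2}\big\Vert_{L^p}\le \sum_{m\ge -N}4^{-ms/2}\,\big\Vert\{\sum_{k}4^{ks}|\psi_{k-m}(D)f_k|^2\}^{1/2}\big\Vert_{L^p}.
\]

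The heart of the matter, and the step I expect to be the main obstacle, is a bound for the inner square function that is \emph{uniform in the shift} $m$:
\[
\big\Vert\{\sum_k 4^{ks}|\psi_{k-m}(D)f_k|^2\}^{1/2}\big\Vert_{L^p}\le C\big\Vert\{\sum_k 4^{ks}|f_k|^2\}^{1/2}\big\Vert_{L^p}.
\]
To prove this I would observe that the convolution kernel of $\psi_{k-m}(D)$ is an $L^1$-normalised dilate of the Schwartz function $\mF^{-1}\psi_1$ (respectively $\mF^{-1}\psi_0$ when $k-m=0$), hence is dominated, uniformly in $k$ and $m$, by a dilated radially decreasing integrable majorant; consequently $|\psi_{k-m}(D)g(x)|\le C\,Mg(x)$ with $M$ the Hardy--Littlewood maximal operator and $C$ independent of $k,m$. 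Applying this with $g=4^{ks/2}f_k$ and then invoking the Fefferman--Stein vector-valued maximal inequality (valid for $p\in(1,\infty)$) yields the displayed estimate with $C$ independent of $m$. An equivalent route is to regard $\mathrm{diag}(\psi_{k-m}(\xi))_k$ as an $\ell^2$-operator-valued Fourier multiplier satisfying the Mikhlin condition uniformly in $m$ and to apply the vector-valued Mikhlin multiplier theorem (as used for the earlier lemmas via \cite[Theorem 2.5.6]{Triebel:I}).

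Finally, combining the last two displays and using that $s>0$ makes the geometric series $\sum_{m\ge -N}4^{-ms/2}=4^{Ns/2}(1-4^{-s/2})^{-1}$ finite, I obtain
\[
\big\Vert\{\sum_{j}4^{js}|\psi_j(D)u|^2\}^{1/2}\big\Vert_{L^p}\le C'\big\Vert\{\sum_k 4^{ks}|f_k|^2\}^{1/2}\big\Vert_{L^p},
\]
which together with Lemma \ref{lemma_sob} gives the assertion for finite sums; the uniform constant then shows the partial sums $\sum_{k\le K}f_k$ are Cauchy in $H^{s,p}$, so letting $K\to\infty$ completes the proof. The role of the hypothesis $s>0$ is precisely the convergence of this shift series, and it is exactly here that any attempt to push the statement to $s\le 0$ breaks down.
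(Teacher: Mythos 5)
Your proof is correct. The paper does not actually prove Lemma \ref{lemma_Taylor_2} — it only refers to \cite[Theorem 2.5.6]{Triebel:I}, \cite[Lemma 1.2]{Marschall:88} and \cite[Appendix A]{Taylor:91} — and your argument (reduction to the square-function norm of Lemma \ref{lemma_sob}, decomposition over the shift $m=k-j$ permitted by the spectral support condition, a shift-uniform square-function bound via pointwise domination of $\psi_j(D)$ by the Hardy--Littlewood maximal operator together with the Fefferman--Stein vector-valued maximal inequality, and summation of the geometric series in $m$, which is exactly where $s>0$ enters) is essentially the standard proof given in those references, with the dependence of the constant on $A$ correctly made explicit through $N(A)$.
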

We conclude this subsection by applying a $S^1_0$-partition of unity to a regularised sequence $A_{k,\eps}:=A_k\ast\rho_\eps$ of distributions $A_k$ in $\mC^r_\ast(\R^n)$. In the proof of Proposition \ref{prop_preliminary} we make  use of the fact that $\mC^r_\ast(\R^n)$ coincides with the Besov space $B^r_{\infty,\infty}(\R^n)$ (see \cite[Appendix]{GH:04} and references therein). $B^r_{\infty,\infty}(\R^n)$ is the space of all $u\in\S'(\R^n)$ such that $\Vert u\Vert_{B^r_{\infty,\infty}}:=\sup_{j\ge 0}2^{jr}\Vert\psi_j(D)u\Vert_\infty<\infty$, where $(\psi_j)_j$ is a $S^1_0$-partition of unity. The definition of $B^r_{\infty,\infty}(\R^n)$ is independent of the choice of the partition $(\psi_j)_j$. It follows that if $u\in\mC^r_\ast(\R^n)$ then
\[
2^{jr}\Vert\psi_j(D)u\Vert_\infty\le \Vert u\Vert_{B^r_{\infty,\infty}}\le c\Vert u\Vert_{\mC^{r}_\ast}.
\]
\begin{proposition}
\label{prop_preliminary}
Let $(A_k)_k$ be a sequence in $\mC^r_\ast(\R^n)$, $r>0$, and $(\psi_j)_j$ be a $S^0_1$-partition of unity. If there exists $C>0$ such that for all $k\in\N$,
$$\Vert A_k\Vert_\infty\le C$$
and
$$\Vert A_k\Vert_{\mC_\ast^r}\le C\,2^{kr}$$ then
\begin{itemize}
\item[(i)] for all $k\in\N$,
\[
\Vert A_{k,\eps}\Vert_\infty\le C,
\]
\item[(ii)] for all $h\ge 0$ there exists a constant $C'>0$ such that for all $k\in\N$ and for all $\eps\in(0,1]$
\[
\Vert A_{k,\eps}\Vert_{\mC^{r+h}_\ast}\le C'\eps^{-h}2^{kr}.
\]
\item[(iii)] Finally, for all $h\ge 0$ there exists a constant $C''>0$ such that for all $j,k\in\N$ and all $\epsilon\in(0,1]$
\[
\Vert \psi_j(D)A_{k,\eps}\Vert_\infty\le C''2^{-j(r+h)}2^{kr}\eps^{-h}.
\]
\end{itemize}
\end{proposition}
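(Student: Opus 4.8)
The three assertions are of increasing depth, but each reduces to a result already at our disposal, so the plan is essentially one of assembly rather than new work. For (i), I would simply invoke Young's inequality: since $A_{k,\eps}=A_k\ast\rho_\eps$ and the $L^1$-norm of $\rho_\eps$ is invariant under the dilation $x\mapsto x/\eps$ and, following the convention adopted in \eqref{est_varphi(D)}, bounded by $1$, one has $\Vert A_{k,\eps}\Vert_\infty\le\Vert A_k\Vert_\infty\Vert\rho_\eps\Vert_1\le\Vert A_k\Vert_\infty\le C$. This reproduces the constant $C$ with no loss, which is why (i) can keep the same $C$ appearing in the hypothesis.

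For (ii), the key point is that $A_k$ lies in $\mC^r_\ast(\R^n)$, so Proposition \ref{prop_ast} applies directly. I would apply it with its source exponent taken to be $r$ and its gain exponent taken to be $h$, which yields a constant $C_h>0$ (independent of both $k$ and $\eps$) such that $\Vert A_k\ast\rho_\eps\Vert_{\mC^{r+h}_\ast}\le C_h\eps^{-h}\Vert A_k\Vert_{\mC^r_\ast}$. Inserting the hypothesis $\Vert A_k\Vert_{\mC^r_\ast}\le C\,2^{kr}$ then gives $\Vert A_{k,\eps}\Vert_{\mC^{r+h}_\ast}\le C'\eps^{-h}2^{kr}$ with $C'=C_hC$, uniformly in $k$ and $\eps$.

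For (iii), I would pass through the Besov description of Zygmund classes recalled just before the statement, namely the estimate $2^{jt}\Vert\psi_j(D)u\Vert_\infty\le c\Vert u\Vert_{\mC^t_\ast}$ (valid because $\mC^t_\ast=B^t_{\infty,\infty}$); here I take $t=r+h$, which is legitimate since $r>0$ forces $r+h>0$, and $u=A_{k,\eps}$. This bounds $\Vert\psi_j(D)A_{k,\eps}\Vert_\infty$ by $c\,2^{-j(r+h)}\Vert A_{k,\eps}\Vert_{\mC^{r+h}_\ast}$, and feeding in the estimate just obtained in (ii) produces $\Vert\psi_j(D)A_{k,\eps}\Vert_\infty\le C''2^{-j(r+h)}2^{kr}\eps^{-h}$, exactly the claimed inequality.

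There is no genuine obstacle here: the whole statement is a bookkeeping corollary of Proposition \ref{prop_ast} together with the Littlewood--Paley characterisation of $\mC^t_\ast$. The one point demanding care is the index matching in part (ii)---ensuring that the $\eps^{-h}$ blow-up in Proposition \ref{prop_ast} is attached to the gain of $h$ units of regularity and that the uniformity in $k$ is preserved---while the extra factor $2^{kr}$ simply rides along from the hypothesis on $\Vert A_k\Vert_{\mC^r_\ast}$. These three estimates, with all of the $j$-, $k$- and $\eps$-dependence made explicit, are precisely what will later feed the decomposition into elementary symbols in Theorem \ref{prop_elem_symb}, which is why every exponent must be tracked by hand.
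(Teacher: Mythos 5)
Your argument coincides with the paper's proof step for step: (i) is Young's inequality with $\Vert\rho_\eps\Vert_1$, (ii) is Proposition \ref{prop_ast} applied to each $A_k$ with the constant uniform in $k$, and (iii) follows from the Littlewood--Paley/Besov characterisation of $\mC^{r+h}_\ast$ fed with the bound from (ii). The proposal is correct and requires no changes.
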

\begin{proof}
Since by definition $A_{k,\eps}=A_k\ast\rho_\eps$ we have that
\[
\Vert A_{k,\eps}\Vert_\infty\le \Vert A_k\Vert_\infty \Vert\rho_\eps\Vert_1\le C.
\]
An application of Proposition \ref{prop_ast} to $A_k$ yields
\[
\Vert A_{k,\eps}\Vert_{\mC^{r+h}_\ast}\le C_0\Vert A_k\Vert_{\mC^r_\ast}\eps^{-h},
\]
where $C_0$ does not depend on $k$ but depends on $h$. Combining this estimate with the hypothesis on $\Vert A_k\Vert_{\mC^r_\ast}$ we conclude that for all $h\ge 0$ there exists a constant $C'>0$ such that for all $k\in\N$
\[
\Vert A_{k,\eps}\Vert_{\mC^{r+h}_\ast}\le C'\eps^{-h}2^{kr}.
\]
Finally, by definition of the class $\mC^{r+h}_\ast$ we have that
\[
\Vert \psi_j(D)A_{k,\eps}\Vert_\infty\le C_1 2^{-j(r+h)}\Vert A_{k,\eps}\Vert_{\mC^{r+h}_\ast}\le C'' 2^{-j(r+h)}2^{kr}\eps^{-h},
\]
for all $k,j$ in $\N$.
\end{proof}

\subsection{$L^p$ and Sobolev boundedness of pseudodifferential operators with symbol in $\mC^r_\ast S^0_{1,1}(\R^{2n})$}
We begin by considering pseudodifferential operators with elementary symbol.
\begin{definition}
\label{def_elem}
We say that $a(x,\xi)$ is an elementary symbol in $\mC^r_\ast S^0_{1,1}(\R^{2n})$ if it is of the form
\[
\sum_{k=0}^\infty A_k(x)\varphi_k(\xi),
\]
and has the following properties:
\begin{itemize}
\item[(i)] the smooth functions $\varphi_k$ are supported on $\{\xi:\, 2^{k-1}\le|\xi|\le 2^{k+1}\}$ with $\varphi_k(\xi)=\varphi_1(2^{-k+1}\xi)$ for $k\ge 1$ and $\varphi_0$ is supported on $\{\xi:\, |\xi|\le 2\}$,
\item[(ii)] there exists a constant $C>0$ such that for all $k\in\N$,
\[
\Vert A_k\Vert_\infty\le C,\qquad\qquad \Vert A_k\Vert_{\mC_\ast^r}\le C\,2^{kr}.
\]
\end{itemize}
Analogously, $a(x,\xi)$ is an elementary symbol in $\mC^r_\ast S^m_{1,1}(\R^{2n})$ if and only if $a(x,\xi)\sharp\lara{\xi}^{-m}$ is an elementary symbol in $\mC^r_\ast S^0_{1,1}(\R^{2n})$.
\end{definition}
\begin{theorem}
\label{prop_elem_symb}
Let $r>0$ and $a(x,\xi)$ be an elementary symbol in $\mC^r_\ast S^0_{1,1}(\R^{2n})$. Let $(\psi_j)_j$ be a $S^0_1$-partition of unity and $A_{kj,\eps}:=\psi_j(D)A_{k,\eps}:=\psi_j(D)(A_k\ast\rho_\eps)$. Set
\[
a_\eps(x,\xi)=\sum_k\big\{\sum_{j=0}^{k-4}A_{kj,\eps}(x)+\sum_{j=k-3}^{k+3}A_{kj,\eps}+\sum_{j=k+4}^\infty A_{kj,\eps}(x)\big\}\varphi_k(\xi)=a_{1,\eps}(x,\xi)+a_{2,\eps}(x,\xi)+a_{3,\eps}(x,\xi).
\]
Then, the following estimates hold:
\begin{itemize}
\item[(i)] for all $s>0$ and all $p\in(1,\infty)$ there exists $C_1>0$ such that
\[
\Vert a_{1,\eps}(x,D)f\Vert_{H^{s,p}}\le C_1\Vert f\Vert_{H^{s,p}},
\]
for all $\eps\in(0,1]$ and all $f\in\S(\R^n)$;
\item[(ii)] for all $s>0$ and all $p\in(1,\infty)$ there exists $C_1>0$ such that
\[
\Vert a_{2,\eps}(x,D)f\Vert_{H^{s,p}}\le C_2\Vert f\Vert_{H^{s,p}},
\]
for all $\eps\in(0,1]$ and all $f\in\S(\R^n)$;
\item[(iii)] for all $p\in(1,\infty)$, for all $h\ge 0$ and all $s\in(0,r+h)$ there exists $C_3>0$ such that
\[
\quad\Vert a_{3,\eps}(x,D)f\Vert_{H^{s,p}}\le C_3\eps^{-h}\Vert f\Vert_{H^{s,p}}.
\]
for all $\eps\in(0,1]$ and all $f\in\S(\R^n)$.
\end{itemize}
\end{theorem}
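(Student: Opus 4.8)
The plan is to reduce each of the three operators to a discrete square-function estimate and to feed into it the quantitative bounds of Proposition \ref{prop_preliminary}, keeping the factor $\eps^{-h}$ under explicit control. Writing $f^{(i)}_k$ for the $k$-th summand of $a_{i,\eps}(x,D)f$, in every case the spectrum of $f^{(i)}_k$ will be contained in a ball $\{|\xi|\le A2^{k+1}\}$: indeed $\varphi_k(D)f$ is supported in $\{|\xi|\le 2^{k+1}\}$ and each coefficient adds only a comparable or smaller frequency. Hence Lemma \ref{lemma_Taylor_2} applies for $s>0$ and reduces the $H^{s,p}$-norm of each sum to $\big\Vert(\sum_k 4^{ks}|f^{(i)}_k|^2)^{1/2}\big\Vert_{L^p}$, after which the three cases diverge only in how the square function is controlled.

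For (i) and (ii) I would exploit that the coefficients are bounded in $L^\infty$ uniformly in $k$ and $\eps$. Since $\sum_{j=0}^{k-4}A_{kj,\eps}=\big(\sum_{l\le k-4}\psi_l(D)\big)A_{k,\eps}$, Proposition \ref{prop_partition} together with Proposition \ref{prop_preliminary}(i) gives $\Vert\sum_{j=0}^{k-4}A_{kj,\eps}\Vert_\infty\le c\Vert A_{k,\eps}\Vert_\infty\le cC$; likewise each of the seven terms $\Vert\psi_j(D)A_{k,\eps}\Vert_\infty\le cC$ for $k-3\le j\le k+3$. Thus $|f^{(i)}_k(x)|\le C'|\varphi_k(D)f(x)|$ pointwise for $i=1,2$, and Remark \ref{rem_Marschall} applied to the generalised Littlewood--Paley family $(\varphi_k)_k$ yields $\big\Vert(\sum_k 4^{ks}|f^{(i)}_k|^2)^{1/2}\big\Vert_{L^p}\le C''\big\Vert(\sum_k 4^{ks}|\varphi_k(D)f|^2)^{1/2}\big\Vert_{L^p}\le C'''\Vert f\Vert_{H^{s,p}}$. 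None of these steps uses the $\mC^r_\ast$-bound, so the constants are independent of $\eps$, giving (i) and (ii) for all $s>0$, $p\in(1,\infty)$.

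Part (iii) carries the whole quantitative content, and I would reorganise the double sum as $a_{3,\eps}(x,D)f=\sum_j g_j$ with $g_j:=\sum_{k\le j-4}A_{kj,\eps}\,\varphi_k(D)f$. Because $A_{kj,\eps}=\psi_j(D)A_{k,\eps}$ is supported in $\{2^{j-1}\le|\xi|\le 2^{j+1}\}$ while $\varphi_k(D)f$ lives in $\{|\xi|\le 2^{k+1}\}\subseteq\{|\xi|\le 2^{j-3}\}$, the spectrum of $g_j$ lies in an annulus $\{|\xi|\sim 2^j\}$, so $\supp\widehat{g_j}\subseteq\{|\xi|\le 2^{j+2}\}$ and Lemma \ref{lemma_Taylor_2} reduces the problem, for $s>0$, to bounding $\big\Vert(\sum_j 4^{js}|g_j|^2)^{1/2}\big\Vert_{L^p}$. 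Now I invoke Proposition \ref{prop_preliminary}(iii), $\Vert A_{kj,\eps}\Vert_\infty\le C''2^{-j(r+h)}2^{kr}\eps^{-h}$, to obtain the pointwise bound
\[
2^{js}|g_j(x)|\le C''\eps^{-h}\,2^{-j(r+h-s)}\sum_{k\le j-4}2^{k(r-s)}c_k(x),\qquad c_k(x):=2^{ks}|\varphi_k(D)f(x)|.
\]

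The decisive point is that $s\in(0,r+h)$ makes $\delta:=r+h-s>0$, so the factor $2^{-j\delta}$ decays. The linear map $c\mapsto\big(2^{-j\delta}\sum_{k\le j-4}2^{k(r-s)}c_k\big)_j$ has kernel $K(j,k)=2^{-j\delta}2^{k(r-s)}\mathbf{1}_{\{0\le k\le j-4\}}$, and a Schur test shows it is bounded on $\ell^2$: summing in $k$ gives $\sum_k K(j,k)\lesssim 2^{-jh}\le 1$ (according to the sign of $r-s$ the inner sum is geometric, subgeometric, or linear, but in each case matches the exponent $r-s-\delta=-h$), while summing in $j$ gives $\sum_j K(j,k)\lesssim 2^{-kh}\le 1$; both close up uniformly precisely because $h\ge 0$ and $\delta>0$. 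Applying this $\ell^2$-bound pointwise in $x$, then taking the $L^p$-norm and using Remark \ref{rem_Marschall} for $(\varphi_k)_k$, I get
\[
\Big\Vert\big(\sum_j 4^{js}|g_j|^2\big)^{1/2}\Big\Vert_{L^p}\le C\eps^{-h}\Big\Vert\big(\sum_k 4^{ks}|\varphi_k(D)f|^2\big)^{1/2}\Big\Vert_{L^p}\le C_3\eps^{-h}\Vert f\Vert_{H^{s,p}},
\]
which is exactly (iii). The main obstacle is this last case: one must first arrange the frequencies so that $g_j$ is spectrally localised at $2^j$ (which forces the reindexing and the constraint $k\le j-4$), and then verify that the two Schur sums close with exponent $-h$. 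It is this bookkeeping, rather than any single estimate, that simultaneously produces the sharp blow-up $\eps^{-h}$ and the enlarged range $s<r+h$.
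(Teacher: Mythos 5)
Your proposal is correct and follows essentially the same route as the paper: reduction via Lemma \ref{lemma_Taylor_2} to discrete square functions, uniform $L^\infty$ control of the coefficients through Propositions \ref{prop_partition} and \ref{prop_preliminary} for parts (i)--(ii), and for part (iii) the same reindexing $\sum_{k}\sum_{j\ge k+4}=\sum_{j}\sum_{k\le j-4}$ combined with the bound $\Vert\psi_j(D)A_{k,\eps}\Vert_\infty\lesssim 2^{-j(r+h)}2^{kr}\eps^{-h}$, followed by Remark \ref{rem_Marschall}. Your Schur test on the kernel $2^{-j(r+h-s)}2^{k(r-s)}\mathbf{1}_{\{k\le j-4\}}$ is just a repackaging of the paper's Cauchy--Schwarz step plus interchange of the $j,k$ sums, and it closes for exactly the same reason, namely $r+h-s>0$.
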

\begin{proof}
Our proof makes use of the methods employed by M. Taylor in \cite[p.49-51]{Taylor:91}. We begin by considering $a_{1,\eps}(x,D)f=\sum_k\sum_{j=0}^{k-4}A_{kj,\eps}(x)\varphi_k(D)f$ with $f\in\S(\R^n)$. Let $f_{k,\eps}:=\sum_{j=0}^{k-4}A_{kj,\eps}(x)\varphi_k(D)f$. We can write $a_{1,\eps}(x,D)f$ as $\sum_{k=4}^\infty f_{k,\eps}$.  Since there exists $A>0$ such that $\supp(\mF(A_{kj,\eps}\varphi_k(D)f))\subseteq\{\xi:\, |\xi|\le A 2^{k+1}\}$ for all $k\ge 4$, $j=0,...,k-4$ and $\eps\in(0,1]$, an application of Lemma \ref{lemma_Taylor_2} to the sequence $(f_{k,\eps})_k$ yields
\[
\Vert a_{1,\eps}(x,D)f\Vert_{H^{s,p}}\le C\Vert\big\{\sum_{k=4}^\infty 4^{ks}\big|\sum_{j=0}^{k-4}A_{kj,\eps}\varphi_k(D)f\big|^2\big\}^{\frac{1}{2}}\Vert_{L^p}.
\]
From Proposition \ref{prop_partition} we have that $\{\sum_{l\le j}\psi_l(D):\, j\in\N\}$ is uniformly bounded on $L^\infty$ and making use of the estimate on $\Vert A_{k,\eps}\Vert_\infty$ of Proposition \ref{prop_preliminary} we conclude that there exists constants $C_0$ and $C'$ independent of $\eps$ such that
\begin{multline*}
\Vert\big\{\sum_{k=4}^\infty 4^{ks}\big|\sum_{j=0}^{k-4}A_{kj,\eps}\varphi_k(D)f\big|^2\big\}^{\frac{1}{2}}\Vert_{L^p}\le C_0\Vert \big\{\sum_{k=4}^\infty 4^{ks}\Vert A_{k,\eps}\Vert_{\infty}^2|\varphi_k(D)f|^2\big\}^{\frac{1}{2}}\Vert_{L^p}\\
\le C'\Vert \big\{\sum_{k=0}^\infty 4^{ks}|\varphi_k(D)f|^2\big\}^{\frac{1}{2}}\Vert_{L^p}.
\end{multline*}
Remark \ref{rem_Marschall} applied to $(\varphi_k(D)f)_k$ yields
\[
\Vert \big\{\sum_{k=0}^\infty 4^{ks}|\varphi_k(D)f|^2\big\}^{\frac{1}{2}}\Vert_{L^p}\le C_1\Vert f\Vert_{H^{s,p}}.
\]
Let us now take $a_{2,\eps}(x,\xi)=\sum_k\big\{\sum_{j=k-3}^{k+3}A_{kj,\eps}(x)\big\}\varphi_k(\xi)$. As above, an application of Lemma \ref{lemma_Taylor_2} combined with Proposition \ref{prop_partition} and Remark \ref{rem_Marschall}, entails
\[
\Vert a_{2,\eps}(x,D)f\Vert_{H^{s,p}}\le C\Vert\big\{\sum_{k=0}^\infty 4^{ks}\big|\sum_{j=k-3}^{k+3}A_{kj,\eps}\varphi_k(D)f\big|^2\big\}^{\frac{1}{2}}\Vert_{L^p}\le C_2\Vert f\Vert_{H^{s,p}},
\]
for some constant $C_2$ independent of $\eps$ and $f$. In order to estimate $a_{3,\eps}(x,D)f$ we recall that by Proposition \ref{prop_preliminary} for all $h\ge 0$ there exists $C'>0$ such that
\[
\Vert \psi_j(D)A_{k,\eps}\Vert_\infty\le C' 2^{-j(r+h)}2^{kr}\eps^{-h},
\]
for all $j,k$ and for all $\eps\in(0,1]$. From Lemma \ref{lemma_Taylor_2} we have
\begin{multline*}
\Vert a_{3,\eps}(x,D)f\Vert_{H^{s,p}}=\Vert\sum_{k=0}^\infty\sum_{j=k+4}^\infty A_{kj,\eps}\varphi_k(D)f\Vert_{H^{s,p}}=\Vert\sum_{j=4}^\infty\sum_{k=0}^{j-4} A_{kj,\eps}\varphi_k(D)f\Vert_{H^{s,p}}\\
\le C\Vert\big\{\sum_{j=4}^\infty 4^{js}\big|\sum_{k=0}^{j-4}A_{kj,\eps}\varphi_k(D)f\big|^2\big\}^{\frac{1}{2}}\Vert_{L^p}\le C'\eps^{-h}\Vert\big\{\sum_{j=4}^\infty 4^{j(s-r-h)}\big(\sum_{k=0}^{j-4}2^{k(r+h)}|\varphi_k(D)f|\big)^2\big\}^{\frac{1}{2}}\Vert_{L^p}.
\end{multline*}
Since
\begin{multline*}
\sum_{j=4}^\infty 4^{j(s-r-h)}\big(\sum_{k=0}^{j-4}2^{k(r+h)}|\varphi_k(D)f|\big)^2=\sum_{j=4}^\infty \big(\sum_{k=0}^{j-4}2^{(k-j)(r+h-s)}2^{ks}|\varphi_k(D)f|\big)^2\\
\le 2\sum_{j=4}^\infty \sum_{k=0}^{j-4}2^{(k-j)(r+h-s)}4^{ks}|\varphi_k(D)f|^2,
\end{multline*}
changing order in the sums, we get
\[
\Vert\big\{\sum_{j=4}^\infty 4^{j(s-r-h)}\big(\sum_{k=0}^{j-4}2^{k(r+h)}|\varphi_k(D)f|\big)^2\big\}^{\frac{1}{2}}\Vert_{L^p}\le \sqrt{2}\Vert\big\{\sum_{j=0}^\infty 4^{j(s-r-h)}\sum_{k=0}^\infty 4^{ks}|\varphi_k(D)f|^2\big\}^{\frac{1}{2}}\Vert_{L^p}.
\]
Hence, if $0<s<r+h$ there exists a constant $C''$ such that
\[
C'\eps^{-h}\Vert\big\{\sum_{j=4}^\infty 4^{j(s-r-h)}\big(\sum_{k=0}^{j-4}2^{k(r+h)}|\varphi_k(D)f|\big)^2\big\}^{\frac{1}{2}}\Vert_{L^p}\le C''\eps^{-h}\Vert\big\{\sum_{k=0}^\infty 4^{ks}|\varphi_k(D)f|^2\big\}^{\frac{1}{2}}\Vert_{L^p}.
\]
Again by Remark \ref{rem_Marschall} we conclude that there exists $C_3>0$ such that for all $\eps\in(0,1]$ and $f\in\S(\R^n)$,
\[
\Vert a_{3,\eps}(x,D)f\Vert_{H^{s,p}}\le C_3\eps^{-h}\Vert f\Vert_{H^{s,p}}.
\]
\end{proof}
\begin{corollary}
\label{cor_cont}
Let $a(x,\xi)$ be an elementary symbol in $\mC^r_\ast S^m_{1,1}(\R^{2n})$. If $r>0$ and $p\in(1,\infty)$ then for all $h\ge 0$ and all $s\in(0,r+h)$ there exists $C>0$ such that
\[
\Vert a_\eps(x,D)f\Vert_{H^{s,p}}\le C\eps^{-h}\Vert f\Vert_{H^{s+m,p}}.
\]
for all $\eps\in(0,1]$ and $f\in{H^{s+m,p}}(\R^n)$.
\end{corollary}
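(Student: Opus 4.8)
The plan is to reduce the order-$m$ estimate to the order-zero Theorem \ref{prop_elem_symb} by peeling off the Fourier multiplier $\lara{D}^m$. By Definition \ref{def_elem}, the hypothesis that $a$ is an elementary symbol in $\mC^r_\ast S^m_{1,1}(\R^{2n})$ means precisely that $b(x,\xi):=(a\sharp\lara{\xi}^{-m})(x,\xi)$ is an elementary symbol in $\mC^r_\ast S^0_{1,1}(\R^{2n})$. Since $\lara{\xi}^{-m}$ does not depend on $x$, the operator $\lara{D}^{-m}$ is a pure Fourier multiplier and the composition is exact, with no lower-order corrections, so that in fact $b(x,\xi)=a(x,\xi)\lara{\xi}^{-m}$ and $b(x,D)=a(x,D)\circ\lara{D}^{-m}$, i.e., $a(x,D)=b(x,D)\circ\lara{D}^m$.

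The second step is to check that this factorisation survives the regularisation. Because the convolution defining $a_\eps(x,\xi)=(a(\cdot,\xi)\ast\rho_\eps)(x)$ acts only on the $x$-variable, while $\lara{\xi}^{-m}$ is constant in $x$, one has $b_\eps(x,\xi)=(b(\cdot,\xi)\ast\rho_\eps)(x)=a_\eps(x,\xi)\lara{\xi}^{-m}$, and hence the identity $a_\eps(x,D)=b_\eps(x,D)\circ\lara{D}^m$ holds for every $\eps\in(0,1]$. Observe that $b_\eps$ is exactly the regularisation of the order-zero elementary symbol $b$, so it falls directly under the hypotheses of Theorem \ref{prop_elem_symb}.

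Finally, given $f\in H^{s+m,p}(\R^n)$, I would set $g:=\lara{D}^m f$. Since $\lara{D}^m$ is an isometry from $H^{s+m,p}$ onto $H^{s,p}$, we have $\Vert g\Vert_{H^{s,p}}=\Vert f\Vert_{H^{s+m,p}}$ and $a_\eps(x,D)f=b_\eps(x,D)g$. Writing $b_\eps=b_{1,\eps}+b_{2,\eps}+b_{3,\eps}$ as in Theorem \ref{prop_elem_symb} and summing the three estimates (i)--(iii), while using that $\eps^{-h}\ge 1$ for $\eps\in(0,1]$ and $h\ge 0$, yields
\[
\Vert b_\eps(x,D)g\Vert_{H^{s,p}}\le (C_1+C_2+C_3)\,\eps^{-h}\Vert g\Vert_{H^{s,p}}
\]
for all $s\in(0,r+h)$ and all $p\in(1,\infty)$. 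Combining the last identities then gives the claimed bound $\Vert a_\eps(x,D)f\Vert_{H^{s,p}}\le C\eps^{-h}\Vert f\Vert_{H^{s+m,p}}$.

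The only point that requires genuine care, rather than being a formal manipulation, is the exactness claim in the first step: I would want to justify that right-composition of $a(x,D)$ with the $x$-independent multiplier $\lara{D}^{-m}$ produces the symbol $a(x,\xi)\lara{\xi}^{-m}$ with no asymptotic remainder. This follows immediately from $\widehat{\lara{D}^{-m}u}(\xi)=\lara{\xi}^{-m}\widehat{u}(\xi)$ together with the defining oscillatory integral for $a(x,D)$. Everything else is bookkeeping, the substantive analytic content having already been absorbed into Theorem \ref{prop_elem_symb}.
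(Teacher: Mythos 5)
Your argument is correct and is essentially the paper's own proof: the paper likewise writes $a_\eps(x,D)f=(a_\eps(x,D)\lara{D_x}^{-m})\lara{D_x}^m f$, notes that $(a(\cdot,\xi)\ast\rho_\eps)\sharp\lara{\xi}^{-m}=(a(\cdot,\xi)\sharp\lara{\xi}^{-m})\ast\rho_\eps$ is the regularisation of an order-zero elementary symbol, and invokes Theorem \ref{prop_elem_symb}. Your explicit remark that right-composition with the $x$-independent multiplier $\lara{\xi}^{-m}$ is an exact product with no remainder is a correct (and welcome) justification of the $\sharp$ manipulation the paper leaves implicit.
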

\begin{proof}
We begin by writing $\Vert a_\eps(x,D)f\Vert_{H^{s,p}}$ as
\beq
\label{eq_1}
\Vert (a_\eps(x,D)\lara{D_x}^{-m})\lara{D_x}^m f\Vert_{H^{s,p}}.
\eeq
Observing that $(a(\cdot,\xi)\ast\rho_\eps)\sharp\lara{\xi}^{-m}=(a(\cdot,\xi))\sharp\lara{\xi}^{-m})\ast\rho_\eps$ with $a(\cdot,\xi))\sharp\lara{\xi}^{-m}$ elementary symbol in  $\mC^r_\ast S^0_{1,1}(\R^{2n})$ by applying Theorem \ref{prop_elem_symb} we have that if $r>0$ and $p\in(1,\infty)$ then for all $h\ge 0$ and all $s\in(0,r+h)$ there exists $C>0$ such that
\beq
\label{eq_2}
\Vert (a_\eps(x,D)\lara{D_x}^{-m})\lara{D_x}^m f\Vert_{H^{s,p}}\le C\eps^{-h}\Vert\lara{D_x}^m f\Vert_{H^{s,p}}.
\eeq
Combining \eqref{eq_1} with \eqref{eq_2} we conclude that for all $h\ge 0$ and all $s\in(0,r+h)$ there exists $C>0$ such that
\[
\Vert a_\eps(x,D)f\Vert_{H^{s,p}}\le C\eps^{-h}\Vert f\Vert_{H^{s+m,p}}
\]
for all $\eps\in(0,1]$ and all $f\in{H^{s+m,p}}(\R^n)$.
\end{proof}
It is well-known that a symbol $a\in \mC^r_\ast S^0_{1,1}(\R^{2n})$ can be decomposed into a sum of elementary symbols. More precisely, referring to \cite[p. 48-49]{Taylor:91} and \cite[p. 18-20]{Taylor:00}, we have that
\[
a(x,\xi)=\sum_{\nu=1}^\infty c_\nu\sum_{k=0}^\infty a_k^\nu(x)\varphi_k^\nu(\xi),
\]
where, the sequence $c_\nu$ is rapidly decreasing and $\sum_{k=0}^\infty a_k^\nu(x)\varphi_k^\nu(\xi)$ is an elementary symbol. In addition there exists a constant $c>0$ such that
\beq
\label{est_elem_imp}
\Vert a_k^\nu\Vert_\infty\le c,\qquad\qquad \Vert a^\nu_k\Vert_{\mC_\ast^r}\le c\,2^{kr}
\eeq
for all values of $k$ and $\nu$. Passing to the regularisation via convolution with a mollifier $\rho_\eps$ we easily see that
\[
a_\eps(x,\xi)=(a(\cdot,\xi)\ast\rho_\eps)(x)=\sum_\nu c_\nu\sum_k a_{k,\eps}^\nu(x)\varphi_{k}^\nu(\xi),
\]
where $a^\nu_{k,\eps}(x)=a^\nu_k\ast\rho_\eps(x)$. We are now ready to prove the following theorem.
\begin{theorem}
\label{cor_cont_1}
Let $a(x,\xi)$ be a symbol in $\mC^r_\ast S^m_{1,1}(\R^{2n})$. If $r>0$ and $p\in(1,\infty)$ then for all $h\ge 0$ and all $s\in(0,r+h)$ there exists $C>0$ such that
\[
\Vert a_\eps(x,D)f\Vert_{H^{s,p}}\le C\eps^{-h}\Vert f\Vert_{H^{s+m,p}}
\]
for all $\eps\in(0,1]$ and $f\in{H^{s+m,p}}(\R^n)$.
\end{theorem}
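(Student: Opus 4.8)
The plan is to reduce the theorem for a general symbol in $\mC^r_\ast S^m_{1,1}(\R^{2n})$ to the already-established result for \emph{elementary} symbols, namely Corollary \ref{cor_cont}. The key is the decomposition stated just above this theorem: any $a\in\mC^r_\ast S^m_{1,1}(\R^{2n})$ can be written as
\[
a(x,\xi)=\sum_{\nu=1}^\infty c_\nu\sum_{k=0}^\infty a_k^\nu(x)\varphi_k^\nu(\xi),
\]
where $(c_\nu)_\nu$ is rapidly decreasing and each inner sum $\sum_k a_k^\nu(x)\varphi_k^\nu(\xi)$ is an elementary symbol in $\mC^r_\ast S^m_{1,1}(\R^{2n})$ satisfying the uniform bounds \eqref{est_elem_imp}. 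The crucial point for the $\eps$-dependence is that the convolution with $\rho_\eps$ commutes with this decomposition: since $\rho_\eps$ acts only in the $x$-variable and the sum over $\nu$ and $k$ converges in $\S'$, one obtains $a_\eps(x,\xi)=\sum_\nu c_\nu\sum_k a^\nu_{k,\eps}(x)\varphi_k^\nu(\xi)$ with $a^\nu_{k,\eps}=a^\nu_k\ast\rho_\eps$.

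First I would fix $h\ge 0$, $s\in(0,r+h)$ and $p\in(1,\infty)$, and apply Corollary \ref{cor_cont} to each elementary symbol $a^\nu(x,\xi):=\sum_k a^\nu_k(x)\varphi_k^\nu(\xi)$. The essential observation is that the continuity constant $C$ produced by Corollary \ref{cor_cont} depends on the elementary symbol only through the uniform bounds in its Definition \ref{def_elem}(ii); inspecting the proof of Theorem \ref{prop_elem_symb} (and hence of Corollary \ref{cor_cont}), the constant is controlled entirely by the single constant $C$ appearing in those estimates, together with $s$, $r$, $h$, $p$ and $n$. Since \eqref{est_elem_imp} provides a \emph{uniform} constant $c$ valid for all $\nu$, there is a single constant $C_0>0$, independent of $\nu$, such that
\[
\Vert a^\nu_\eps(x,D)f\Vert_{H^{s,p}}\le C_0\,\eps^{-h}\Vert f\Vert_{H^{s+m,p}}
\]
for all $\nu$, all $\eps\in(0,1]$ and all $f\in H^{s+m,p}(\R^n)$.

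Then I would sum over $\nu$. Writing $a_\eps(x,D)f=\sum_\nu c_\nu\, a^\nu_\eps(x,D)f$ and applying the triangle inequality in $H^{s,p}$ together with the uniform estimate above gives
\[
\Vert a_\eps(x,D)f\Vert_{H^{s,p}}\le \Big(\sum_\nu |c_\nu|\Big)\,C_0\,\eps^{-h}\Vert f\Vert_{H^{s+m,p}}.
\]
Because $(c_\nu)_\nu$ is rapidly decreasing, the factor $\sum_\nu|c_\nu|$ is finite, so setting $C:=C_0\sum_\nu|c_\nu|$ yields the desired bound with a constant independent of $\eps$ and $f$.

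The main obstacle I anticipate is \emph{not} the formal summation but the justification that the constant $C_0$ is genuinely uniform in $\nu$: one must verify that nothing in the proof of Theorem \ref{prop_elem_symb} secretly depends on the particular functions $\varphi_k^\nu$ or $a_k^\nu$ beyond the support conditions of Definition \ref{def_elem}(i) and the uniform bounds \eqref{est_elem_imp}. The support structure of the $\varphi_k^\nu$ (dyadic annuli, as in Definition \ref{def_elem}(i)) is exactly what is needed for Lemma \ref{lemma_Taylor_2}, Remark \ref{rem_Marschall} and the geometric-series estimate in the $a_{3,\eps}$-term, and these hold with constants depending only on $s,r,h,p,n$; the bounds on the $A_k$ enter only through $\Vert A_k\Vert_\infty$ and $\Vert A_k\Vert_{\mC^r_\ast}$, which \eqref{est_elem_imp} controls uniformly. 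A secondary point requiring care is the convergence of $\sum_\nu c_\nu a^\nu_\eps(x,D)f$ in $H^{s,p}$, which is guaranteed by the rapid decay of $(c_\nu)_\nu$ and the uniform-in-$\nu$ operator bound just obtained; this also legitimises interchanging the operator $a_\eps(x,D)$ with the sum over $\nu$.
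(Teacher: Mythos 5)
Your proposal is correct and follows essentially the same route as the paper: decomposition into elementary symbols, application of the elementary-symbol estimate (Theorem \ref{prop_elem_symb} via Corollary \ref{cor_cont}) with a constant uniform in $\nu$ thanks to \eqref{est_elem_imp} and the uniform structure of the $\varphi_k^\nu$, and summation over $\nu$ using the rapid decay of $(c_\nu)_\nu$. The uniformity issue you flag is exactly the point the paper also singles out (it additionally notes that the relevant norms of $\varphi_0^\nu,\varphi_1^\nu$ may be taken independent of $\nu$), so no gap remains.
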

\begin{proof}
It is not restrictive to assume that $a$ has order $0$. Making use of the decomposition into elementary symbols above we concentrate on
\[
\sum_k a_{k,\eps}^\nu(x)\varphi_{k}^\nu(\xi),
\]
where we can assume that $\Vert \varphi_0^\nu\Vert_1$ and $\Vert\varphi_1^\nu\Vert_1$ do not depend on $\nu$. We recall that in the estimates \eqref{est_elem_imp} the constant $c$ does not depend on $\nu$ and $k$. An investigation of the proof of Theorem \ref{prop_elem_symb}, in which we make use of the results of Propositions \ref{prop_partition} and \ref{prop_preliminary}, shows that if $r>0$ and $p\in(1,\infty)$ then for all $h\ge 0$ and all $s\in(0,r+h)$ there exists $C_1>0$, independent of $\nu$ and $k$, such that
\[
\Vert a^\nu_{k,\eps}(x,D)f\Vert_{H^{s,p}}\le C_1\eps^{-h}\Vert f\Vert_{H^{s+m,p}}.
\]
for all $\eps\in(0,1]$ and $f\in{H^{s+m,p}}(\R^n)$. Since the sequence $c_\nu$ is rapidly decreasing in $\nu$ we can conclude that for each $h\ge 0$ and $s\in(0,r+h)$ there exists a constant $C>0$ such that
\[
\Vert a_\eps(x,D)f\Vert_{H^{s,p}}\le C\eps^{-h}\Vert f\Vert_{H^{s+m,p}}
\]
for all $\eps\in(0,1]$.
\end{proof}


We conclude this section with the following continuity result for pseudodifferential operators with regular symbol of type $(1,\delta)$.
\begin{proposition}
\label{prop_Sm1delta}
Let $a\in S^m_{1,\delta}(\R^{2n})$ with $\delta\in[0,1)$. If $p\in(1,\infty)$ then for all $s\in\R$ there exists $C>0$ such that
\[
\Vert a_\eps(x,D)f\Vert_{H^{s,p}}\le C\Vert f\Vert_{H^{s+m,p}}
\]
for all $\eps\in(0,1]$.
\end{proposition}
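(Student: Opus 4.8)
The plan is to reduce the statement to the classical Sobolev continuity of pseudodifferential operators with \emph{regular} symbol of type $(1,\delta)$, $\delta<1$; the only genuinely new point is that the resulting bound is uniform with respect to $\eps$. The key observation is that, in sharp contrast with the non-regular situation of Theorem \ref{cor_cont_1}, mollifying a symbol which is \emph{already} smooth in $x$ produces no blow-up: the regularised symbols $a_\eps$ remain in $S^m_{1,\delta}(\R^{2n})$ with seminorms bounded uniformly in $\eps\in(0,1]$. Hence, unlike in the $\mC^r_\ast S^m_{1,1}$ case, one does not need the $\eps$-dependent estimates of Section \ref{sec_Taylor}.

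First I would check this uniform seminorm estimate. Writing $a_\eps(x,\xi)=\int_{\R^n} a(x-z,\xi)\,\rho_\eps(z)\,dz$ and differentiating under the integral sign (legitimate since $a$ is smooth and $\rho_\eps\in\S(\R^n)$), one has, for all $\alpha,\beta\in\N^n$,
\[
D^\beta_x D^\alpha_\xi a_\eps(x,\xi)=\int_{\R^n}(D^\beta_x D^\alpha_\xi a)(x-z,\xi)\,\rho_\eps(z)\,dz.
\]
Taking absolute values, using the defining estimate $|D^\beta_x D^\alpha_\xi a(y,\xi)|\le c_{\alpha\beta}\lara{\xi}^{m-|\alpha|+\delta|\beta|}$ uniformly in $y$, and recalling that $\Vert\rho_\eps\Vert_1=\Vert\rho\Vert_1$ is independent of $\eps$, I obtain
\[
|D^\beta_x D^\alpha_\xi a_\eps(x,\xi)|\le c_{\alpha\beta}\Vert\rho\Vert_1\,\lara{\xi}^{m-|\alpha|+\delta|\beta|}
\]
for all $x,\xi$ and all $\eps\in(0,1]$. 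Thus $(a_\eps)_\eps$ is a bounded subset of $S^m_{1,\delta}(\R^{2n})$, every seminorm being dominated by $\Vert\rho\Vert_1$ times the corresponding seminorm of $a$.

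Next I would invoke the classical Sobolev boundedness of type $(1,\delta)$ operators with $\delta<1$: for $p\in(1,\infty)$ and $s\in\R$, every $b\in S^m_{1,\delta}(\R^{2n})$ gives a continuous map $b(x,D)\colon H^{s+m,p}\to H^{s,p}$, and -- this is the decisive feature -- the operator norm is bounded by a constant times finitely many of the $S^m_{1,\delta}$-seminorms of $b$, the constant and the number of seminorms depending only on $n,p,s,m,\delta$. A convenient route is to reduce to the order-zero case by composing with $\lara{D_x}^s$ and $\lara{D_x}^{-s-m}$: by the symbolic calculus for type $(1,\delta)$ symbols the operator $\lara{D_x}^s b(x,D)\lara{D_x}^{-s-m}$ has symbol in $S^0_{1,\delta}(\R^{2n})$ with seminorms controlled polynomially by those of $b$, and operators with symbol in $S^0_{1,\delta}$, $\delta<1$, are bounded on $L^p$ for $p\in(1,\infty)$ with norm controlled by finitely many seminorms (see \cite[Chapters 1, 2]{Taylor:91} and \cite{Stein:72}). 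Applying this with $b=a_\eps$ (a priori on $\S(\R^n)$, then extending by density) and combining it with the uniform seminorm bound of the previous step, the resulting constant is independent of $\eps$, which is exactly the assertion.

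The one substantial ingredient is this last classical boundedness result; everything else is elementary. I want to stress that the restriction $\delta<1$ enters precisely here: at $\delta=1$ the $L^p$-boundedness of $S^0_{1,1}$ operators fails in general, which is why the regularisation machinery of Section \ref{sec_Taylor} was needed for the type $(1,1)$ symbols, whereas for $\delta<1$ no regularisation is required and the estimate is automatically uniform in $\eps$.
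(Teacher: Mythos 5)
Your proof is correct, but it takes a genuinely different route from the paper's. You observe that mollification preserves the class $S^m_{1,\delta}$ with seminorms bounded uniformly in $\eps$ (since $\Vert\rho_\eps\Vert_1=\Vert\rho\Vert_1$), and then invoke, essentially as a black box, the classical Sobolev boundedness of type $(1,\delta)$ operators with $\delta<1$ together with the fact that the operator norm is controlled by finitely many symbol seminorms (via the $(1,\delta)$ symbolic calculus and the $L^p$ theory for $S^0_{1,\delta}$). The paper instead stays inside its own machinery: it embeds $S^m_{1,\delta}(\R^{2n})\subseteq\mC^k_\ast S^m_{1,1}(\R^{2n})$ for every $k\in\N$, using $\Vert D^\alpha_\xi a(\cdot,\xi)\Vert_{\mC^k_\ast}\le C\sup_{|\beta|\le k}\Vert D^\alpha_\xi D^\beta_x a(\cdot,\xi)\Vert_{L^\infty}\le C_\alpha\lara{\xi}^{m-|\alpha|+k}$; it then applies Theorem \ref{cor_cont_1} with $h=0$ to obtain the estimate for all $s>0$ uniformly in $\eps$, passes to the transposed operator ${\,}^t a_\eps(x,D)=\wt{a}_\eps(x,D)$ and uses the duality $(H^{s,p'}(\R^n))'=H^{-s,p}(\R^n)$ to cover $s<-m$, and finally fills the remaining range of $s$ between $-m$ and $0$ by complex interpolation, via Proposition \ref{prop_interpolation} and \eqref{inter_Sob}. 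What your argument buys is brevity and a clean conceptual explanation of why no $\eps$-loss occurs (the dichotomy $\delta<1$ versus $\delta=1$); what the paper's argument buys is self-containedness --- it never needs the general $(1,\delta)$ calculus or the corresponding $L^p$ theorem, neither of which is proved in the paper (Section 3 only treats $S^0=S^0_{1,0}$) --- and it puts to work the interpolation result for nets of operators prepared in Section 1. Both proofs are valid.
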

\begin{proof}
We begin by observing that $S^m_{1,\delta}(\R^{2n})\subseteq \mC^k_\ast S^m_{1,1}(\R^{2n})$ for all $k\in\N$. This is due to the fact that the space of continuous and bounded functions with continuous and bounded derivatives up to order $k$ is contained in $\mC^k_\ast(\R^n)$ (see \cite[Appendix A]{Taylor:91}). In detail,
\[
\Vert D^\alpha_\xi a(\cdot,\xi)\Vert_{\mC^k_\ast}\le C\sup_{|\beta|\le k}\Vert D^\alpha_\xi D^\beta_x a(x,\xi)\Vert_{L^\infty(\R^n_x)}\le C_\alpha\lara{\xi}^{m-|\alpha|+\delta k}\le C_\alpha\lara{\xi}^{m-|\alpha|+k}
\]
and from the definition of $S^m_{1,\delta}(\R^{2n})$,
\[
\Vert D^\alpha_\xi a(\cdot,\xi)\Vert_\infty\le c_\alpha\lara{\xi}^{m-|\alpha|}.
\]
We can therefore fix $s\in(0,k)$ and apply Theorem \ref{cor_cont_1} with $h=0$. We obtain that
\[
\Vert a_\eps(x,D)f\Vert_{H^{s,p}}\le C\Vert f\Vert_{H^{s+m,p}}.
\]
Making now $k$ vary in $\N$ we conclude that the previous mapping property holds for all $s\in(0,+\infty)$ with $C$ depending on $s$. Let us consider the transposed operator ${\,}^t a_\eps(x,D)$ of $a_\eps(x,D)$. We can write
\[
{\,}^t(a_\eps(x,D))f=\wt{a}_\eps(x,D)f,
\]
where $\wt{a}(x,\xi)=a(x,-\xi)$. Arguing as above and applying Theorem \ref{cor_cont_1} to $\wt{a}_\eps(x,D)$ we have that for all $s>0$ there exists a constant $C>0$ independent of $\eps$ such that for all $\eps\in(0,1]$,
\[
\Vert {\,}^t a_\eps(x,D)f\Vert_{H^{s,p}}\le C\Vert f\Vert_{H^{s+m,p}}.
\]
Since $(H^{s,p'}(\R^n))'=H^{-s,p}(\R^n)$ with $1/p+1/p'=1$ by duality methods we obtain that
\begin{multline*}
\Vert a_\eps(x,D)f\Vert_{H^{-s-m,p}}\le \sup_{\Vert g\Vert_{H^{s+m,p'}}\le 1}|\lara{a_\eps(x,D)f,g}|= \sup_{\Vert g\Vert_{H^{s+m,p'}}\le 1}|\lara{f,{\,}^t a_\eps(x,D)g}|\\
\le \Vert f\Vert_{H^{-s,p}}\Vert{\,}^t a_\eps(x,D)g\Vert_{H^{s,p'}}\le C\Vert f\Vert_{H^{-s,p}}.
\end{multline*}
This means that for all $s<0$,
\[
\Vert a_\eps(x,D)f\Vert_{H^{s-m,p}}\le C\Vert f\Vert_{H^{s,p}},
\]
or in other words,
\[
\Vert a_\eps(x,D)f\Vert_{H^{s,p}}\le C\Vert f\Vert_{H^{s+m,p}}.
\]
for $s<-m$.\\
We now take the interpolation couples $\{H^{s_0+m}_p,H^{s_1+m}_p\}$ and $\{H^{s_0}_p,H^{s_1}_p\}$, with $s_0=-m-\lambda$, $s_1=\lambda$, $\lambda>0$. The net of operators $a_\eps(x,D)$ maps $H^{s_j+m}_p$ into $H^{s_j}_p$ for $j=0,1$ and fulfills the hypothesis of Proposition \ref{prop_interpolation} with $\omega_{0,\eps}=c_0>0$ and $\omega_{1,\eps}=c_1>0$. Making use of \eqref{inter_Sob} and of Proposition \ref{prop_interpolation} we conclude that
\[
\Vert a_\eps(x,D)f\Vert_{H^{s+m}_p}\le \max(c_0,c_1)\Vert f\Vert_{H^{s,p}}
\]
for all $\eps\in(0,1]$ and for $s=(1-\theta)s_0+\theta s_1$ with $\theta\in(0,1)$. This means that also for $s\in[-m,0]$ (if $m>0$) and for $s\in[0,-m]$ (if $m<0$) there exists some constant $C>0$ such that
\[
\Vert a_\eps(x,D)f\Vert_{H^{s,p}}\le C\Vert f\Vert_{H^{s+m,p}}
\]
for all $\eps\in(0,1]$.
\end{proof}

\section{$L^p$ and Sobolev boundedness of nets of pseudodifferential operators with regular symbol}
This section is devoted to nets of pseudodifferential operators $a_\eps(x,D)$ with regular symbol, i.e. $a_\eps\in S^m(\R^{2n})$, depending on the parameter $\eps\in(0,1]$. We recall that $a\in S^m(\R^{2n})$ if and only if
\[
|a|^{(m)}_{\alpha,\beta}=\sup_{(x,\xi)\in\R^{2n}}\lara{\xi}^{-m+|\alpha|}|\partial^\alpha_\xi\partial^\beta_x a(x,\xi)|<\infty,
\]
for all $\alpha,\beta\in\N^n$. Differently from the previous sections, we do not require that $a_\eps$ is generated by a symbol $a$ via convolution with a mollifier $\rho_\eps$. We therefore consider a wider class of nets $a_\eps(x,D)$ with respect to Section \ref{sec_Taylor}. 
We want to investigate the $L^p$ and Sobolev boundedness of a net of pseudodifferential operators $a_\eps(x,D)$ with $(a_\eps)_\eps\in S^m(\R^{2n})^{(0,1]}$. This requires the following lemmas whose proof can be found in \cite[Lemma 10.9]{Wong:99}, \cite[Theorem 2.5]{Hoermander:60} and \cite[Lemma 10.10]{Wong:99}.
\begin{lemma}
\label{lemma_1}
Let $Q_0$ be the cube with center at the origin and edges of length $1$ parallel to the coordinates axes in $\R^n$. Let $\eta\in\Cinfc(\R^n)$ be identically $1$ on $Q_0$. Let $a\in S^0(\R^n)$, $a_m(x,\xi)=\eta(x-m)a(x,\xi)$ for $m\in \Z^n$ and
\[
\widehat{a}_m(\lambda,\xi)=\int_{\R^n}\esp^{-i\lambda x}a_m(x,\xi)\, dx.
\]
Then, for all $\alpha\in\N^n$ and all $N\in\N$ there exists $C>0$ depending only on $n$, $\eta$ and $N$ such that
\[
|D^\alpha_\xi\widehat{a}_m(\lambda,\xi)|\le C\sup_{|\beta|\le N}|a|^{(0)}_{\alpha,\beta}\,\lara{\xi}^{-|\alpha|}\lara{\lambda}^{-N},
\]
for all $(\lambda,\xi)\in\R^{2n}$.
\end{lemma}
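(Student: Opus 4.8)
The plan is to view $\widehat a_m(\lambda,\xi)$ as the partial Fourier transform (in $x$) of the localised symbol $a_m(x,\xi)=\eta(x-m)a(x,\xi)$, to read off the decay in $\xi$ directly from the symbol seminorms, and to generate the decay in $\lambda$ by the classical integration-by-parts device. First I would differentiate under the integral sign; since $\eta$ does not depend on $\xi$,
\[
D^\alpha_\xi\widehat a_m(\lambda,\xi)=\int_{\R^n}\esp^{-i\lambda x}\,\eta(x-m)\,D^\alpha_\xi a(x,\xi)\,dx,
\]
and the integrand is supported in the compact set $m+\supp\eta$, whose measure is independent of $m$.

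To extract powers of $\lambda$ I would use $\lambda^\gamma\esp^{-i\lambda x}=(i\partial_x)^\gamma\esp^{-i\lambda x}$ and integrate by parts $|\gamma|$ times (no boundary terms, by compact support), giving for every multi-index $\gamma$
\[
\lambda^\gamma D^\alpha_\xi\widehat a_m(\lambda,\xi)=\int_{\R^n}\esp^{-i\lambda x}\,(-i\partial_x)^\gamma\big[\eta(x-m)\,D^\alpha_\xi a(x,\xi)\big]\,dx.
\]
Expanding by the Leibniz rule, estimating the derivatives of $\eta$ by constants depending only on $\eta$ and $|\gamma|$, and using $|\partial^\beta_x D^\alpha_\xi a(x,\xi)|=|\partial^\beta_x\partial^\alpha_\xi a(x,\xi)|\le|a|^{(0)}_{\alpha,\beta}\lara{\xi}^{-|\alpha|}$, I obtain
\[
|\lambda^\gamma D^\alpha_\xi\widehat a_m(\lambda,\xi)|\le C\sup_{|\beta|\le|\gamma|}|a|^{(0)}_{\alpha,\beta}\,\lara{\xi}^{-|\alpha|},
\]
with $C$ depending only on $n$, $\eta$ and $|\gamma|$. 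The key point for uniformity is that both the support of $\eta(\cdot-m)$ and the seminorms $|a|^{(0)}_{\alpha,\beta}$ (being suprema over all of $\R^n$) are insensitive to the translation by $m$, so the constant does not depend on $m$.

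Finally I would pass from the monomial weights $\lambda^\gamma$ to the isotropic weight $\lara{\lambda}^N$ by means of the elementary inequality $\lara{\lambda}^N\le C_{N,n}\sum_{|\gamma|\le N}|\lambda^\gamma|$, which one proves by separating the cases $|\lambda|\le1$ and $|\lambda|\ge1$ and using $|\lambda|^N\le n^{N/2}\sum_{|\gamma|=N}|\lambda^\gamma|$. Summing the previous estimate over $|\gamma|\le N$ then yields exactly
\[
\lara{\lambda}^N|D^\alpha_\xi\widehat a_m(\lambda,\xi)|\le C\sup_{|\beta|\le N}|a|^{(0)}_{\alpha,\beta}\,\lara{\xi}^{-|\alpha|},
\]
which is the claim. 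I do not anticipate a serious obstacle here: this is a uniform (in $m$) form of the standard non-stationary estimate. The only matters requiring care are the bookkeeping in the Leibniz expansion and checking that the conversion to $\lara{\lambda}^N$ uses at most $N$ integrations by parts, so that the supremum on the right genuinely runs over $|\beta|\le N$; the inequality above guarantees this, since only monomials of degree $\le N$ intervene.
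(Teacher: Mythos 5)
Your proof is correct and is essentially the standard non-stationary phase argument (differentiate under the integral, integrate by parts against $\lambda^\gamma\esp^{-i\lambda x}$, Leibniz, then convert $\sum_{|\gamma|\le N}|\lambda^\gamma|$ to $\lara{\lambda}^N$), which is exactly the proof the paper delegates to the cited reference (Wong, Lemma 10.9); your explicit remark that the constant is uniform in $m$ because the support of $\eta(\cdot-m)$ has $m$-independent measure and the seminorms are global suprema is the one point that matters for the application. No gaps.
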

\begin{lemma}
\label{lemma_2}
Let $f\in \mC^k(\R^n\setminus 0)$, $k>n/2$, be such that there exists $B>0$ for which
\[
|D^\alpha f(\xi)|\le B|\xi|^{-|\alpha|},\qquad \xi\neq 0,
\]
for all $\alpha\in\N^n$ with $|\alpha|\le k$. Then, for all $p\in(1,\infty)$ there exists $C>0$ depending only on $p$ and $n$, such that
\[
\Vert f(D)(\varphi)\Vert_p\le CB\Vert\varphi\Vert_p,
\]
for all $\varphi\in\S(\R^n)$.
\end{lemma}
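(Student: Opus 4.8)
The plan is to recognise this as the Mikhlin--H\"ormander multiplier theorem and to prove it by Calder\'on--Zygmund theory. Writing $K=\F^{-1}f$ for the (tempered) convolution kernel, so that $f(D)\varphi=K\ast\varphi$ up to a constant factor, I would first note that the case $|\al|=0$ of the hypothesis gives $\linf{f}\le B$, whence by Plancherel $f(D)$ is bounded on $L^2$ with norm $\le B$. The whole difficulty is then to pass from $L^2$ to $L^p$, and the standard route is to verify the H\"ormander integral condition on the kernel,
\[
\sup_{y\neq 0}\int_{|x|\ge 2|y|}|K(x-y)-K(x)|\,dx\le CB,
\]
after which the Calder\'on--Zygmund theorem yields weak type $(1,1)$ and hence boundedness on $L^p$ for $1<p\le2$; the range $2\le p<\infty$ then follows by duality, since the transpose ${}^t f(D)$ has symbol $f(-\xi)$, which satisfies exactly the same derivative bounds with the same constant $B$.

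To control $K$ I would use a dyadic decomposition. Fixing $\phi\in\Cinfc(\R^n)$ supported in $\{1/2\le|\xi|\le2\}$ with $\sum_{j\in\Z}\phi(2^{-j}\xi)=1$ for $\xi\neq0$, set $f_j(\xi)=f(\xi)\phi(2^{-j}\xi)$ and $K_j=\F^{-1}f_j$, so that $K=\sum_j K_j$ away from the origin. On the support of $f_j$ one has $|\xi|\sim2^{j}$, and Leibniz's rule together with $|D^\be f|\le B|\xi|^{-|\be|}$ and $|D^\ga[\phi(2^{-j}\cdot)]|\le C2^{-j|\ga|}$ gives $|D^\al f_j|\le CB\,2^{-j|\al|}$ for $|\al|\le k$ on a set of measure $\sim2^{jn}$. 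Plancherel converts these into $L^2$ bounds for $x^\al K_j$ and $x^\al\nabla K_j$: one finds $\ltwo{K_j}\le CB\,2^{jn/2}$, $\ltwo{|x|^kK_j}\le CB\,2^{-jk}2^{jn/2}$, and likewise $\ltwo{\nabla K_j}\le CB\,2^{j}2^{jn/2}$ and $\ltwo{|x|^k\nabla K_j}\le CB\,2^{j}2^{-jk}2^{jn/2}$. Splitting each integral at the natural scale $|x|=2^{-j}$ and applying Cauchy--Schwarz, the exponent condition $2k>n$ (equivalently $k>n/2$) makes $\int_{|x|>2^{-j}}|x|^{-2k}\,dx$ converge, and one extracts the two uniform estimates $\int|\nabla K_j|\,dx\le CB\,2^{j}$ and the tail bound $\int_{|x|\ge2|y|}|K_j(x)|\,dx\le CB\,(2^{j}|y|)^{-(k-n/2)}$.

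Finally I would assemble the H\"ormander condition by splitting the sum over $j$ at the threshold $2^{j}\sim1/|y|$. For $2^{j}|y|\le1$ the mean value inequality $|K_j(x-y)-K_j(x)|\le|y|\sup_t|\nabla K_j(x-ty)|$ contributes $\le CB\,|y|\,2^{j}$, and the geometric sum over $2^{j}\le1/|y|$ is bounded by $CB$. For $2^{j}|y|>1$ the tail bound applied to both $K_j(x-y)$ and $K_j(x)$ gives $\le CB\,(2^{j}|y|)^{-(k-n/2)}$, and since $k-n/2>0$ the geometric sum over $2^{j}>1/|y|$ is again $\le CB$. Adding the two ranges gives the H\"ormander condition uniformly in $y$, and the Calder\'on--Zygmund machinery together with duality finishes the proof with $C$ depending only on $p$ and $n$. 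I expect the main obstacle to be precisely this balancing act: the regularity threshold $k>n/2$ enters only through the convergence of $\int|x|^{-2k}\,dx$ at infinity in the Cauchy--Schwarz step, and the argument succeeds only if the dyadic estimates are arranged so that both the low-frequency (gradient) and high-frequency (tail) sums telescope to a constant multiple of $B$.
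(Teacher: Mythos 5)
Your argument is correct, and it coincides with the proof of the result the paper actually relies on: the paper does not prove Lemma \ref{lemma_2} itself but cites it as Theorem 2.5 of H\"ormander's 1960 paper, i.e.\ the Mikhlin--H\"ormander multiplier theorem, whose classical proof is exactly your route ($L^2$ bound by Plancherel, dyadic verification of the H\"ormander integral condition using $k>n/2$ through the Cauchy--Schwarz step, Calder\'on--Zygmund theory plus duality). The only point worth making explicit is that $k$ is taken to be an integer so that the weighted Plancherel identities $\Vert x^\alpha K_j\Vert_2=c\Vert D^\alpha f_j\Vert_2$ for $|\alpha|\le k$ apply verbatim; with that understood, your estimates are complete and give the stated dependence of $C$ on $p$ and $n$ only.
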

\begin{lemma}
\label{lemma_3}
Let $a\in S^0(\R^{2n})$ and $K_a$ be the distribution $\mF^{-1}_{\xi,\to z}(a(x,\xi))$ in $\S'(\R^{2n})$. Then,
\begin{itemize}
\item[(i)] for each $x\in\R^n$, $K_a(x,\cdot)$ is a function defined on $\R^n\setminus 0$,
\item[(ii)] for each $N$ sufficiently large there exists a constant $c$, depending only on $N$ and $n$ such that
\[
|K_a(x,z)|\le c\sup_{|\alpha|\le N}|a|^{(0)}_{\alpha,0}\, |z|^{-N}
\]
for all $z\neq 0$,
\item[(iii)] for each $x\in\R^n$ and $\varphi\in\S(\R^n)$ vanishing in a neighborhood of $x$,
\[
a(x,D)\varphi(x)=\int_{\R^n}K_a(x,x-z)\varphi(z)\, dz.
\]
\end{itemize}
\end{lemma}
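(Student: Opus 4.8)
The plan is to realise $K_a$ as an oscillatory integral and to tame it, at every step, by integrating by parts in $\xi$ against the oscillating phase $\esp^{\,iz\cdot\xi}$, exploiting the $S^0$ estimates $|\partial^\alpha_\xi a(x,\xi)|\le |a|^{(0)}_{\alpha,0}\lara{\xi}^{-|\alpha|}$. With the normalisation $\mF^{-1}g(z)=(2\pi)^{-n}\int_{\R^n}\esp^{\,iz\cdot\xi}g(\xi)\,d\xi$, I set
\[
K_a(x,z)=\mF^{-1}_{\xi\to z}\big(a(x,\cdot)\big)(z),
\]
which is meaningful for each fixed $x$ since $a(x,\cdot)\in\S'(\R^n)$ when $a\in S^0$. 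The one real obstruction is that for an order-zero symbol this integral is only conditionally convergent; all three assertions are instances of removing this obstruction away from the singular set $z=0$.

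For (i) and (ii), fix $x$ and note that for every multi-index $\gamma$ one has, as tempered distributions,
\[
z^\gamma K_a(x,z)=i^{|\gamma|}\,\mF^{-1}_{\xi\to z}\big(\partial^\gamma_\xi a(x,\cdot)\big)(z).
\]
When $|\gamma|=N>n$, the estimate $|\partial^\gamma_\xi a(x,\xi)|\le |a|^{(0)}_{\gamma,0}\lara{\xi}^{-N}$ makes $\partial^\gamma_\xi a(x,\cdot)$ integrable, so the right-hand side is the bounded continuous function $(2\pi)^{-n}i^{|\gamma|}\int\esp^{\,iz\cdot\xi}\partial^\gamma_\xi a(x,\xi)\,d\xi$, whose sup-norm is $\le c\,|a|^{(0)}_{\gamma,0}$. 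Near any $z_0\neq0$ some coordinate $z_{0,j}$ is nonzero; dividing the identity with $\gamma=Ne_j$ by $z_j^N$ exhibits $K_a(x,\cdot)$ as a continuous function there, which gives (i), and smoothness follows verbatim after additionally differentiating in $z$ (this only inserts powers of $\xi$, absorbed by raising $|\gamma|$). Summing over $|\gamma|=N$ and using $|z|^N\le c\sum_{|\gamma|=N}|z^\gamma|$ yields
\[
|z|^N\,|K_a(x,z)|\le c\sum_{|\gamma|=N}|z^\gamma K_a(x,z)|\le c\,\sup_{|\alpha|\le N}|a|^{(0)}_{\alpha,0},\qquad z\neq0,
\]
which is exactly (ii), valid for every $N>n$.

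For (iii) I would start from the definition $a(x,D)\varphi(x)=(2\pi)^{-n}\int\esp^{\,ix\cdot\xi}a(x,\xi)\widehat{\varphi}(\xi)\,d\xi$, insert $\widehat{\varphi}(\xi)=\int\esp^{-iz\cdot\xi}\varphi(z)\,dz$, and interchange the order of integration to produce $\int_{\R^n}K_a(x,x-z)\varphi(z)\,dz$. The interchange is the delicate point, since the $\xi$-integral is again not absolutely convergent; this is where the hypothesis that $\varphi$ vanishes in a neighbourhood of $x$ enters. On $\supp\varphi$ one has $|x-z|\ge\delta>0$, so applying the same device — now via $-\Delta_\xi\esp^{\,i(x-z)\cdot\xi}=|x-z|^2\esp^{\,i(x-z)\cdot\xi}$ and integrating by parts $M$ times with $2M>n$ — the double integrand is dominated by $c\,|x-z|^{-2M}\lara{\xi}^{-2M}|\varphi(z)|$, which is integrable in $(\xi,z)$. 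Fubini then applies and gives the stated kernel representation.

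The main obstacle, as indicated, is purely the conditional convergence of the defining oscillatory integral; the recurring remedy is the integration-by-parts identity built from $-\Delta_\xi\esp^{\,iz\cdot\xi}=|z|^2\esp^{\,iz\cdot\xi}$ (equivalently the factorisation $z^\gamma K_a=i^{|\gamma|}\mF^{-1}_{\xi\to z}(\partial^\gamma_\xi a)$), which converts order-$0$ boundedness of $a$ and its $\xi$-derivatives into genuine integrability precisely when the distance from the diagonal, $|z|$ respectively $|x-z|$, is bounded below.
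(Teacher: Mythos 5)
Your proof is correct. The paper does not prove this lemma itself but refers to Lemma 10.10 of Wong's book, and your argument --- realising $K_a(x,\cdot)$ as an oscillatory integral and trading powers of $z$ (respectively $|x-z|$) for $\xi$-derivatives of the symbol via integration by parts against the phase, which the $S^0$ estimates convert into absolute convergence away from $z=0$ --- is essentially the standard proof given there.
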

\begin{theorem}
\label{theo_Lp_wong}
Let $(a_\eps)_\eps\in S^0(\R^{2n})^{(0,1]}$ and $p\in(1,\infty)$. Then, there exists $N\in\N$ and a constant $C$ depending only on $n$, $N$ and $p$ such that
\[
\Vert a_\eps(x,D)\varphi\Vert_p\le C\sup_{|\alpha+\beta|\le N}|a_\eps|^{(0)}_{\alpha,\beta}\,\Vert\varphi\Vert_p,
\]
for all $\varphi\in \S(\R^n)$ and $\eps\in(0,1]$.
\end{theorem}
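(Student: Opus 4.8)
The plan is to run, for each fixed $\eps$, the classical cube-decomposition argument for which Lemmas \ref{lemma_1}--\ref{lemma_3} are tailored, while keeping careful track of the fact that the symbol $a_\eps$ enters the final bound \emph{only} through the finite quantity $\sup_{|\alpha+\beta|\le N}|a_\eps|^{(0)}_{\alpha,\beta}$, every other constant depending solely on $n$, $N$, $p$ and the fixed auxiliary function $\eta$. Since all three lemmas are uniform in exactly this sense, the resulting estimate is automatically uniform in $\eps\in(0,1]$, which is the content of the theorem. Throughout I would take $N$ large enough to serve simultaneously as the decay order in Lemmas \ref{lemma_1} and \ref{lemma_3} and to exceed the smoothness threshold $>n/2$ demanded by Lemma \ref{lemma_2}; concretely $N>n$ will suffice, as will be seen below.

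First I would localise in $x$. Let $Q_m=Q_0+m$, $m\in\Z^n$, tile $\R^n$ by unit cubes, and set $a_{\eps,m}(x,\xi)=\eta(x-m)a_\eps(x,\xi)$ as in Lemma \ref{lemma_1}. Since $\eta\equiv1$ on $Q_0$, for $x\in Q_m$ one has $a_\eps(x,D)\varphi(x)=a_{\eps,m}(x,D)\varphi(x)$, whence $\Vert a_\eps(x,D)\varphi\Vert_p^p=\sum_m\int_{Q_m}|a_{\eps,m}(x,D)\varphi(x)|^p\,dx$. I would then split $\varphi=\chi_m\varphi+(1-\chi_m)\varphi$, where $\chi_m$ is a cutoff equal to $1$ on a fixed neighbourhood of $Q_m$ and supported in a slightly larger concentric cube, the whole construction being translation-invariant in $m$.

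For the near piece $a_{\eps,m}(x,D)(\chi_m\varphi)$ I would pass to the Fourier side in $x$: writing $a_{\eps,m}(x,\xi)=(2\pi)^{-n}\int e^{ix\lambda}\widehat a_{\eps,m}(\lambda,\xi)\,d\lambda$ realises $a_{\eps,m}(x,D)$ as a superposition in $\lambda$ of the Fourier multipliers $\widehat a_{\eps,m}(\lambda,D)$, and $|e^{ix\lambda}|=1$ means the exponential is harmless on $L^p$. Lemma \ref{lemma_1} gives $|D^\alpha_\xi\widehat a_{\eps,m}(\lambda,\xi)|\le C\,\lara{\lambda}^{-N}\lara{\xi}^{-|\alpha|}\sup_{|\beta|\le N}|a_\eps|^{(0)}_{\alpha,\beta}$, and since $\lara{\xi}^{-|\alpha|}\le|\xi|^{-|\alpha|}$ for $\xi\ne0$, each such multiplier satisfies the hypotheses of Lemma \ref{lemma_2} with $B\sim\lara{\lambda}^{-N}\sup_{|\alpha+\beta|\le N}|a_\eps|^{(0)}_{\alpha,\beta}$. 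Hence $\Vert\widehat a_{\eps,m}(\lambda,D)(\chi_m\varphi)\Vert_p\le C\lara{\lambda}^{-N}\sup_{|\alpha+\beta|\le N}|a_\eps|^{(0)}_{\alpha,\beta}\Vert\chi_m\varphi\Vert_p$, and integrating in $\lambda$ (convergent once $N>n$) via Minkowski's inequality yields $\Vert a_{\eps,m}(x,D)(\chi_m\varphi)\Vert_p\le C\sup_{|\alpha+\beta|\le N}|a_\eps|^{(0)}_{\alpha,\beta}\Vert\chi_m\varphi\Vert_p$ with $C$ independent of $m$ and $\eps$. The bounded overlap $\sum_m\Vert\chi_m\varphi\Vert_p^p\le C\Vert\varphi\Vert_p^p$ then closes the near part.

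For the far piece I would use Lemma \ref{lemma_3}: for $x\in Q_m$ the function $(1-\chi_m)\varphi$ vanishes near $x$, so $a_{\eps,m}(x,D)((1-\chi_m)\varphi)(x)=\int K_{a_{\eps,m}}(x,x-z)(1-\chi_m(z))\varphi(z)\,dz$, and the decay estimate $|K_{a_{\eps,m}}(x,z)|\le c\,|z|^{-N}\sup_{|\alpha|\le N}|a_{\eps,m}|^{(0)}_{\alpha,0}$ controls the integrand. A Leibniz expansion of $\eta(x-m)a_\eps$ bounds $|a_{\eps,m}|^{(0)}_{\alpha,0}$ by $\sup_{|\alpha+\beta|\le N}|a_\eps|^{(0)}_{\alpha,\beta}$ times a constant depending only on $\eta$, hence uniform in $m$ and $\eps$; and $|x-z|$ is bounded below by a fixed $\delta>0$ on the region of integration. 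The far term is therefore dominated pointwise, for $x\in Q_m$, by $c\sup_{|\alpha+\beta|\le N}|a_\eps|^{(0)}_{\alpha,\beta}\,(\kappa*|\varphi|)(x)$ with the single $m$-independent kernel $\kappa(w)=|w|^{-N}$ for $|w|\ge\delta$ and $\kappa(w)=0$ otherwise; since the $Q_m$ tile $\R^n$, Young's inequality gives the $L^p$ bound provided $\kappa\in L^1$, i.e. $N>n$. The main obstacle is precisely this summation over the infinitely many cubes: the naive bound $\sum_m\Vert a_{\eps,m}(x,D)\varphi\Vert_p^p$ diverges, and only the near/far splitting repairs it — multiplier estimate plus bounded overlap for the local contribution, kernel decay plus Young's inequality for the non-local one. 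As the sole $\eps$-dependent factor in every estimate is $\sup_{|\alpha+\beta|\le N}|a_\eps|^{(0)}_{\alpha,\beta}$, the choice $N>n$ produces a constant $C=C(n,N,p)$ valid simultaneously for all $\eps\in(0,1]$.
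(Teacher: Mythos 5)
Your proof is correct and follows the paper's own route in all essentials: the same tiling of $\R^n$ by unit cubes $Q_m$, the same near/far splitting of $\varphi$ by a translated cutoff, Lemma \ref{lemma_1} plus Lemma \ref{lemma_2} plus Minkowski's inequality in $\lambda$ for the near piece, bounded overlap of the enlarged cubes to sum the near contributions, and Lemma \ref{lemma_3} for the far piece. The one place you genuinely deviate is in closing the far-field sum over $m$: you dominate $|a_\eps(x,D)((1-\chi_m)\varphi)(x)|$ for $x\in Q_m$ by an $m$-independent convolution $c\,\sup_{|\alpha+\beta|\le N}|a_\eps|^{(0)}_{\alpha,\beta}\,(\kappa\ast|\varphi|)(x)$ with $\kappa(w)=|w|^{-N}\mathbf{1}_{\{|w|\ge\delta\}}$, and then sum over the disjoint cubes and apply Young's inequality, needing only $\kappa\in L^1$, i.e.\ $N>n$. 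The paper instead inserts the weight $(\mu+|m-z|)^{-N}$, applies Minkowski and H\"older on each cube separately, and closes the sum via the discrete estimate $\sum_{m}\sum_{l\ne m}(1+|m-l|)^{-Np/2}<\infty$ (estimates \eqref{est_p_4}--\eqref{est_p_5} and the final display before \eqref{fin_est_1}), which forces the slightly more awkward conditions $Np'/2>n$ and $Np/2>n$. Your convolution/Young packaging is marginally cleaner and gives the same uniform constant $C(n,N,p)$; in both versions the symbol enters every bound only through $\sup_{|\alpha+\beta|\le N}|a_\eps|^{(0)}_{\alpha,\beta}$, so uniformity in $\eps\in(0,1]$ is automatic, exactly as you argue.
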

\begin{proof}
As in the proof of Theorem 10.7 in \cite{Wong:99} we write $\R^{n}$ as a union of cubes $Q_m$, where $Q_m$ is the cube with center $m\in \Z^m$ and edges of length $1$ which are parallel to the coordinate axes. Let $Q^{\ast}_m$ and $Q^{\ast\ast}_m$ be cubes with center $m$ and edges parallel to the coordinate axes with length $3/2$ and $2$, respectively. It follows that $Q_m\subset Q_m^\ast\subset Q^{\ast\ast}_m$ and that for some $\delta>0$ one has $|x-z|\ge\delta$ for all $x\in Q_m$ and $z\in\R^n\setminus Q_m^\ast$.

Let now $\psi\in\Cinfc(\R^n)$ be such that $0\le\psi\le 1$, $\supp\psi\subseteq Q_0^{\ast\ast}$ and $\psi(x)=1$ on a neighborhood of $Q_0^\ast$. It follows that $\psi_m(x)=\psi(x-m)$ has support contained in $Q_m^{\ast\ast}$ and $\psi_m(x)=1$ on a neighborhood of $Q_m^\ast$. For each $\varphi\in\S(\R^n)$ we can write $\varphi=\varphi_{1,m}+\varphi_{2,m}$, where $\varphi_{1,m}=\psi_m\varphi$ and $\varphi_{2,m}=(1-\psi_m)\varphi$, and then
\[
a_\eps(x,D)\varphi=a_\eps(x,D)\varphi_{1,m}+a_\eps(x,D)\varphi_{2,m}.
\]
It is clear that
\begin{multline}
\label{decomp_Wong}
\Vert a_\eps(x,D)\varphi\Vert_p^p=\sum_{m\in\Z^m}\int_{Q_m}|a_\eps(x,D)\varphi(x)|^p\, dx\\
\le 2^p\biggl(\sum_{m\in\Z^n}\int_{Q_m}|a_\eps(x,D)\varphi_{1,m}(x)|^p\, dx +\int_{Q_m}|a_\eps(x,D)\varphi_{2,m}(x)|^p\, dx\biggr).
\end{multline}
Our proof consists of three steps:
\begin{enumerate}
\item estimate of $\int_{Q_m}|a_\eps(x,D)\varphi_{1,m}(x)|^p\, dx$,
\item estimate of $\int_{Q_m}|a_\eps(x,D)\varphi_{2,m}(x)|^p\, dx$,
\item combination of $1$ and $2$.
\end{enumerate}

\bf{Step 1.}\rm\, We begin by considering \[
\int_{Q_m}|a_\eps(x,D)v(x)|^p\, dx,
\]
where $v\in \S(\R^n)$. Let $\eta\in\Cinfc(\R^n)$ be identically $1$ on $Q_0$ and $a_m(x,\xi)=\eta(x-m)a(x,\xi)$. Hence,
\beq
\label{ineq_correc}
\int_{Q_m}|a_\eps(x,D)v(x)|^p\, dx\le \int_{\R^n}|a_{m,\eps}(x,D)v(x)|^p\, dx.
\eeq
Since $a_{m,\eps}$ is compactly supported in $x$ we can write $a_{m,\eps}(x,D)v(x)$ as
\[
\int_{\R^n}\esp^{ix\lambda}\int_{\R^n}\esp^{ix\xi}\,\widehat{a}_{m,\eps}(\lambda,\xi)\widehat{v}(\xi)\, \dslash\xi\, \dslash\lambda =\int_{\R^n}\esp^{ix\lambda}\,\widehat{a}_{m,\eps}(\lambda,D)(v)(x)\, \dslash\lambda
\]
From Lemma \ref{lemma_1} we have that for all $N\in\N$
\[
|D^\alpha_\xi\widehat{a}_{m,\eps}(\lambda,\xi)|\le C\sup_{|\beta|\le N}|a_{m,\eps}|^{(0)}_{\alpha,\beta}\,\lara{\xi}^{-|\alpha|}\lara{\lambda}^{-N},
\]
where $C$ depends only on $n$, $\eta$ and $N$. We can therefore apply Lemma \ref{lemma_2} to $f(\xi)=\widehat{a}_{m,\eps}(\lambda,\xi)$ with
\[
B=C\sup_{|\beta|\le N, |\alpha|\le \lfloor n/2\rfloor+1}|a_{m,\eps}|^{(0)}_{\alpha,\beta}\,\lara{\lambda}^{-N}
\]
and obtain that there exists a constant $C'$, depending on $n,N, \eta$ and $p$ such that
\beq
\label{est_m_1}
\Vert \widehat{a}_{m,\eps}(\lambda,D)(v)(x)\Vert_{L^p(\R^n_x)}\le C'\sup_{|\beta|\le N, |\alpha|\le \lfloor n/2\rfloor+1}|a_{m,\eps}|^{(0)}_{\alpha,\beta}\ \lara{\lambda}^{-N}\Vert v\Vert_p
\eeq
for all $\lambda\in\R^n$, for all $\eps\in (0,1]$, for all $m\in\Z^n$ and for all $v\in\S(\R^n)$. An application of the Minkowski's inequality in integral form leads from \eqref{est_m_1} to
\begin{multline*}
\Vert {a}_{m,\eps}(\lambda,D)(v)\Vert_p=\biggl\{\int_{\R^n}\biggl|\int_{\R^n} \esp^{ix\lambda}\,\widehat{a}_{m,\eps}(\lambda,D)(v)(x)\, \dslash\lambda\biggr|^p dx\biggr\}^{\frac{1}{p}}\le\int_{\R^n}\biggl\{\int_{\R^n}|\widehat{a}_{m,\eps}(\lambda,D)(v)(x)|^p dx\biggr\}^{\frac{1}{p}}\dslash\lambda\\
=\int_{\R^n}\Vert \widehat{a}_{m,\eps}(\lambda,D)(v)\Vert_p\, \dslash\lambda\le C'\sup_{|\beta|\le N, |\alpha|\le \lfloor n/2\rfloor+1}|a_{m,\eps}|^{(0)}_{\alpha,\beta}\ \int_{\R^n}\lara{\lambda}^{-N}\, \dslash\lambda\,\Vert v\Vert_p.
\end{multline*}
Thus, choosing $N=n+1$ we get
\beq
\label{est_m_2}
\Vert {a}_{m,\eps}(\lambda,D)(v)\Vert_p\le C'\sup_{\substack{|\beta|\le n+1,\\ |\alpha|\le \lfloor n/2\rfloor+1}}|a_{m,\eps}|^{(0)}_{\alpha,\beta}\ \Vert v\Vert_p,
\eeq
valid for all $m\in\Z^n$, for all $\eps\in(0,1]$ and $v\in\S(\R^n)$. Going back to $\int_{Q_m}|a_\eps(x,D)\varphi_{1,m}(x)|^p\, dx$, the estimate \eqref{est_m_2} combined with \eqref{ineq_correc} yields
\beq
\label{est_m_3}
\int_{Q_m}|a_\eps(x,D)\varphi_{1,m}(x)|^p\, dx\le \Vert {a}_{m,\eps}(\lambda,D)(\varphi_{1,m})\Vert_p^p\le C_p\big(\sup_{\substack{|\beta|\le n+1,\\ |\alpha|\le \lfloor n/2\rfloor+1}}|a_{m,\eps}|^{(0)}_{\alpha,\beta}\big)^p \Vert \varphi_{1,m}\Vert_p^p,
\eeq
where $C_p$ does not depend on $m$ and $\eps$.

\bf{Step 2.}\rm\, We now want to estimate $\int_{Q_m}|a_\eps(x,D)\varphi_{2,m}(x)|^p\, dx$. We start by studying $|a_\eps(x,D)\varphi_{2,m}(x)|$ when $x\in Q_m$. Since $\varphi_{2,m}$ is identically $0$ on $Q^\ast_m\supset Q_m$ from Lemma \ref{lemma_3} we have
\beq
\label{est_p_1}
|a_\eps(x,D)\varphi_{2,m}(x)|=\biggl|\int_{\R^n}K_{a_\eps}(x,x-z)\varphi_{2,m}(z)\, dz\biggr|\le c\sup_{|\alpha|\le  2N}|a_\eps|^{(0)}_{\alpha,0}\int_{\R^n\setminus Q^\ast_m}|x-z|^{-2N}|\varphi_{2,m}(z)|\, dz,
\eeq
valid for $2N>n$ and for all $x\in Q_m$ with some constant $c$ depending only on $n$ and $N$. Let us fix $\lambda\ge\sqrt{n}+1$. Since $|x-z|\ge \delta$ for all $x\in Q_m$ and all $z\in\R^n\setminus Q_m^\ast$, there exists a constant $C_{\lambda,N}$ such that
\beq
\label{est_p_2}
\frac{|x-z|^{-2N}}{(\lambda+|x-z|)^{-2N}}\le C_{\lambda,N}
\eeq
on the same domain and, for all $x\in Q_m$,
\beq
\label{est_p_3}
\lambda+|x-z|\ge\lambda+|m-z|-|x-m|\ge \big(\lambda-\frac{\sqrt{n}}{2}\big)+|m-z|\ge \frac{\sqrt{n}}{2}+1+|m-z|=\mu+|m-z|.
\eeq
Combining \eqref{est_p_1} with \eqref{est_p_2} and \eqref{est_p_3} we get the estimate
\begin{multline}
\label{est_p_4}
|a_\eps(x,D)\varphi_{2,m}(x)|\le c \sup_{|\alpha|\le  2N}|a_\eps|^{(0)}_{\alpha,0}\int_{\R^n\setminus Q^\ast_m}C_{\lambda,N}(\lambda+|x-x|)^{-2N}|\varphi_{2,m}(z)|\, dz\\
\le c\, C_{\lambda,N}\sup_{|\alpha|\le  2N}|a_\eps|^{(0)}_{\alpha,0}\int_{\R^n\setminus Q^\ast_m}\frac{(\mu+|x-z|)^{-N}}{(\mu+|m-z|)^{N}}|\varphi_{2,m}(z)|\, dz,
\end{multline}
valid for all $x\in Q_m$ and all $\eps\in(0,1]$. By Minkowski's inequality and H\"older's inequality we can write
\begin{multline*}
\biggl(\int_{Q_m}|a_\eps(x,D)\varphi_{2,m}(x)|^p\, dx\biggr)^{\frac{1}{p}}\\
\le c\, C_{\lambda,N}\sup_{|\alpha|\le  2N}|a_\eps|^{(0)}_{\alpha,0}\int_{\R^n\setminus Q_m^\ast}\biggl\{\int_{Q_m}\frac{(\mu+|x-z|)^{-Np}}{(\mu+|m-z|)^{Np}}|\varphi_{2,m}(z)|^p\,dx\biggr\}^{\frac{1}{p}}dz\\
=c\, C_{\lambda,N}\sup_{|\alpha|\le  2N}|a_\eps|^{(0)}_{\alpha,0}\int_{\R^n\setminus Q_m^\ast}\frac{|\varphi_{2,m}(z)|}{(\mu+|m-z|)^{N}}\biggl\{\int_{Q_m}(\mu+|x-z|)^{-Np}\,dx\biggr\}^{\frac{1}{p}}dz\\
=C_{\lambda,N,p}\sup_{|\alpha|\le  2N}|a_\eps|^{(0)}_{\alpha,0}\int_{\R^n\setminus Q_m^\ast}\frac{|\varphi_{2,m}(z)|}{(\mu+|m-z|)^{N}}\, dz\\
\le C_{\lambda,N,p}\sup_{|\alpha|\le  2N}|a_\eps|^{(0)}_{\alpha,0}\biggl\{\int_{\R^n\setminus Q_m^\ast}{(\mu+|m-z|)^{\frac{-Np'}{2}}}\, dz\biggr\}^{\frac{1}{p'}}\biggl\{\int_{\R^n\setminus Q_m^\ast}\frac{|\varphi_{2,m}(z)|^p}{(\mu+|m-z|)^{\frac{Np}{2}}}\, dz\biggr\}^{\frac{1}{p}}.
\end{multline*}
At this point choosing $N$ large enough ($Np'/2>n$) we obtain that there exists a constant $C_{\lambda,N,p}$, depending only on $\lambda$, $N$ and $p$ such that
\beq
\label{est_p_5}
\int_{Q_m}|a_\eps(x,D)\varphi_{2,m}(x)|^p\, dx\le C_{\lambda,N,p}\big(\sup_{|\alpha|\le  2N}|a_\eps|^{(0)}_{\alpha,0}\big)^p\int_{\R^n\setminus Q_m^\ast}\frac{|\varphi_{2,m}(z)|^p}{(\mu+|m-z|)^{\frac{Np}{2}}}\, dz,
\eeq
for all $m\in\Z^n$ and $\eps\in(0,1]$.

\bf{Step 3.}\rm\, A combination of \eqref{decomp_Wong} with \eqref{est_m_3} and \eqref{est_p_5} yields
\begin{multline*}
\Vert a_\eps(x,D)\varphi\Vert_p^p\le 2^p C_p\sum_{m\in\Z^n}\big(\sup_{\substack{|\beta|\le n+1,\\ |\alpha|\le \lfloor n/2\rfloor+1}}|a_{m,\eps}|^{(0)}_{\alpha,\beta}\big)^p \Vert \varphi_{1,m}\Vert_p^p\\
+ 2^pC_{\lambda,N,p}\big(\sup_{|\alpha|\le  2N}|a_\eps|^{(0)}_{\alpha,0}\big)^p\sum_{m\in\Z^n}\int_{\R^n\setminus Q_m^\ast}\frac{|\varphi_{2,m}(z)|^p}{(\mu+|m-z|)^{\frac{Np}{2}}}\, dz,
\end{multline*}
with $\lambda\ge\sqrt{n}+1$ and $Np> 2n(p-1)$. From the definition of $a_{m,\eps}$, $\varphi_{1,m}$ and $\varphi_{2,m}$ we get (for some new constant $C_p$),
\begin{multline}
\label{fin_est}
\Vert a_\eps(x,D)\varphi\Vert_p^p\le 2^p C_p\,\big(\sup_{\substack{|\beta|\le n+1,\\ |\alpha|\le \lfloor n/2\rfloor+1}}|a_{\eps}|^{(0)}_{\alpha,\beta}\big)^p \sum_{m\in\Z^n}\int_{Q_m^{\ast\ast}}|\varphi(x)|^p\, dx\\
+ 2^pC_{\lambda,N,p}\big(\sup_{|\alpha|\le  2N}|a_\eps|^{(0)}_{\alpha,0}\big)^p\sum_{m\in\Z^n}\int_{\R^n\setminus Q_m^\ast}\frac{|\varphi_{2,m}(z)|^p}{(\mu+|m-z|)^{\frac{Np}{2}}}\, dz\\
\le 2^p C_p\,\big(\sup_{\substack{|\beta|\le n+1,\\ |\alpha|\le \lfloor n/2\rfloor+1}}|a_{\eps}|^{(0)}_{\alpha,\beta}\big)^p \sum_{m\in\Z^n}\int_{Q_m^{\ast\ast}}|\varphi(x)|^p\, dx\\
+ 2^pC_{\lambda,N,p}\big(\sup_{|\alpha|\le  2N}|a_\eps|^{(0)}_{\alpha,0}\big)^p\sum_{m\in\Z^n}\sum_{l\neq m}\int_{Q_l}\frac{|\varphi_{2,m}(z)|^p}{(\mu+|m-z|)^{\frac{Np}{2}}}\, dz.
\end{multline}
Arguing as in \eqref{est_p_3} we have that $\mu+|m-z|\ge 1+|m-l|$ when $z\in Q_l$ with $l\neq m$. Hence
\begin{multline*}
\sum_{m\in\Z^n}\sum_{l\neq m}\int_{Q_l}\frac{|\varphi_{2,m}(z)|^p}{(\mu+|m-z|)^{\frac{Np}{2}}}\, dz\le \sum_{m\in\Z^n}\sum_{l\neq m}(1+|m-l|)^{-\frac{Np}{2}}\int_{Q_l}{|\varphi_{2,m}(z)|^p}\, dz\\
\le\sum_{m\in\Z^n}\sum_{l\in\Z^n}(1+|m-l|)^{-\frac{Np}{2}}\int_{Q_l}{|\varphi_{2,m}(z)|^p}\, dz\le\sum_{m\in\Z^n}(1+|m|)^{-\frac{Np}{2}}\sum_{l\in\Z^n}\int_{Q_l}{|\varphi(z)|^p}\, dz.
\end{multline*}
At this point choosing $Np>\max(2n(p-1),2n)$ and going back to \eqref{fin_est} we obtain the estimate
\beq
\label{fin_est_1}
\Vert a_\eps(x,D)\varphi\Vert_p^p\le C_{p,n,N}\biggl(\big(\sup_{\substack{|\beta|\le n+1,\\ |\alpha|\le \lfloor n/2\rfloor+1}}|a_{\eps}|^{(0)}_{\alpha,\beta}\big)^p+\big(\sup_{|\alpha|\le  2N}|a_\eps|^{(0)}_{\alpha,0}\big)^p\biggr)\Vert \varphi\Vert_p^p,
\eeq
valid for all $\eps\in(0,1]$ and $\varphi\in\S(\R^n)$. This completes the proof.
\end{proof}
\begin{remark}
We recall that a net of symbols $(a_\eps)_\eps$ in $S^m(\R^{2n})$ is \emph{moderate} if for all $\alpha,\beta\in\N^n$ there exists $N\in\N$ such that
\[
|a_\eps|^{(m)}_{\alpha,\beta}=O(\eps^{-N})
\]
as $\eps\to 0$. This is the typical representative of a generalised symbol in the Colombeau framework as defined in \cite{Garetto:ISAAC07, GGO:03}. Theorem \ref{theo_Lp_wong} shows that the net $(a_\eps(x,D)\varphi)_\eps$ has in the norm $\Vert\cdot\Vert_p$ the same kind of dependence on $\eps$ of the symbol $(a_\eps)_\eps$. It follows that, via action of the corresponding pseudodifferential operator, moderate nets of symbols provide moderate nets of $L^p$ functions.
\end{remark}
\begin{corollary}
\label{corol_Lp_wong}
Let $(a_\eps)_\eps\in S^m(\R^{2n})^{(0,1]}$ and $p\in(1,\infty)$. Then, for all $s\in\R$ there exists $N\in\N$ and a constant $C$ depending only on $n$, $N$, $m$, $s$ and $p$ such that
\[
\Vert a_\eps(x,D)\varphi\Vert_{H^{s,p}}\le C\sup_{|\alpha+\beta|\le N}| a_\eps|^{(m)}_{\alpha,\beta}\,\Vert\varphi\Vert_{H^{s+m,p}},
\]
for all $\varphi\in \S(\R^n)$ and $\eps\in(0,1]$.
\end{corollary}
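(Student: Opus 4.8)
The plan is to reduce the $H^{s,p}$ estimate to the $L^p$ estimate of Theorem~\ref{theo_Lp_wong} by conjugating $a_\eps(x,D)$ with the Bessel potentials $\lara{D_x}^t$. Since $\Vert a_\eps(x,D)\varphi\Vert_{H^{s,p}}=\Vert \lara{D_x}^s a_\eps(x,D)\varphi\Vert_p$, I would insert the identity $\lara{D_x}^{-(s+m)}\lara{D_x}^{s+m}$ and set
\[
b_\eps(x,D):=\lara{D_x}^s\, a_\eps(x,D)\,\lara{D_x}^{-(s+m)},\qquad \psi:=\lara{D_x}^{s+m}\varphi .
\]
Then $b_\eps(x,D)\psi=\lara{D_x}^s a_\eps(x,D)\varphi$, so that $\Vert a_\eps(x,D)\varphi\Vert_{H^{s,p}}=\Vert b_\eps(x,D)\psi\Vert_p$, while at the same time $\Vert\psi\Vert_p=\Vert\varphi\Vert_{H^{s+m,p}}$. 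The task is thus reduced to an $L^p$ bound for $b_\eps(x,D)$ together with a seminorm estimate relating $b_\eps$ to $a_\eps$.

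Next I would show that $(b_\eps)_\eps\in S^0(\R^{2n})^{(0,1]}$. Right composition with the Fourier multiplier $\lara{D_x}^{-(s+m)}$ is \emph{exact} in the Kohn--Nirenberg calculus: the operator $a_\eps(x,D)\lara{D_x}^{-(s+m)}$ has symbol $c_\eps(x,\xi)=a_\eps(x,\xi)\lara{\xi}^{-(s+m)}\in S^{-s}$. Left composition with the order-$s$ Fourier multiplier $\lara{D_x}^s$ then produces, by the symbolic calculus for classical H\"ormander symbols (see \cite{Wong:99}), a symbol $b_\eps\in S^{0}$ admitting the asymptotic expansion $b_\eps(x,\xi)\sim\sum_{\alpha}\tfrac{1}{\alpha!}(\partial_\xi^\alpha\lara{\xi}^s)\,(D_x^\alpha c_\eps(x,\xi))$.

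The only real obstacle is the $\eps$-uniform seminorm bound
\[
\sup_{|\alpha+\beta|\le N'}|b_\eps|^{(0)}_{\alpha,\beta}\le C''\,\sup_{|\alpha+\beta|\le N}|a_\eps|^{(m)}_{\alpha,\beta},
\]
with $C''$ and $N$ independent of $\eps$. The key observation is that both multipliers $\lara{D_x}^s$ and $\lara{D_x}^{-(s+m)}$ are \emph{fixed}, so their symbol seminorms are $\eps$-independent constants; only $a_\eps$ varies with $\eps$. The composition calculus bounds each seminorm of $b_\eps$ by a finite sum of products of seminorms of the three factors, hence linearly by finitely many seminorms of $a_\eps$, with constants depending only on $n$, $m$, $s$. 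To make this rigorous I would not invoke the asymptotic expansion qualitatively but keep a finite-order Taylor remainder in the composition formula and verify that every term, and the remainder, is controlled by $\sup_{|\alpha+\beta|\le N}|a_\eps|^{(m)}_{\alpha,\beta}$ uniformly in $\eps$; this is the step where the bookkeeping must be done carefully.

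Finally, applying Theorem~\ref{theo_Lp_wong} to $(b_\eps)_\eps$ furnishes $N'\in\N$ and a constant $C>0$ (depending only on $n$, $N'$, $p$) with $\Vert b_\eps(x,D)\psi\Vert_p\le C\sup_{|\alpha+\beta|\le N'}|b_\eps|^{(0)}_{\alpha,\beta}\Vert\psi\Vert_p$. Combining this with the identities $\Vert a_\eps(x,D)\varphi\Vert_{H^{s,p}}=\Vert b_\eps(x,D)\psi\Vert_p$ and $\Vert\psi\Vert_p=\Vert\varphi\Vert_{H^{s+m,p}}$, and with the uniform seminorm estimate, yields
\[
\Vert a_\eps(x,D)\varphi\Vert_{H^{s,p}}\le C\,C''\,\sup_{|\alpha+\beta|\le N}|a_\eps|^{(m)}_{\alpha,\beta}\,\Vert\varphi\Vert_{H^{s+m,p}}
\]
for all $\varphi\in\S(\R^n)$ and all $\eps\in(0,1]$, which is the claimed estimate.
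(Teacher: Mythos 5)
Your proposal is correct and is essentially the paper's own proof: the paper simply says to apply Theorem \ref{theo_Lp_wong} to the operator with symbol $\lara{\xi}^s\sharp a_\eps(x,\xi)\sharp\lara{\xi}^{-s-m}$, which is exactly your conjugation by Bessel potentials. The only difference is that you spell out the $\eps$-uniform seminorm bookkeeping for the composed symbol, which the paper leaves implicit.
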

\begin{proof}
Apply Theorem \ref{theo_Lp_wong} to the pseudodifferential operator with symbol $\lara{\xi}^s\sharp a_\eps(x,\xi)\sharp\lara{\xi}^{-s-m}$.
\end{proof} 

\bibliographystyle{abbrv}
\bibliography{claudia}

\end{document}